\theoremstyle{definition}
\newtheorem{thm}{Theorem}[section]
\newtheorem{lem}[thm]{Lemma}
\newtheorem{cor}[thm]{Corollary}
\newtheorem{prop}[thm]{Proposition}
\newtheorem{dfn}[thm]{Definition}
\newtheorem{rem}[thm]{Remark}
\newtheorem{ex}[thm]{Example}
\newcommand{\R}{\mathbb{R}}
\newcommand{\Q}{\mathbb{Q}}
\newcommand{\Z}{\mathbb{Z}}
\newcommand{\C}{\mathbb{C}}
\newcommand{\e}{\mathbf{e}}
\newcommand{\row}{\mathrm{Row}}
\newcommand{\Sym}{\mathfrak{S}}
\newcommand{\Aut}{Aut}
\newcommand{\cB}{\mathcal{B}}
\newcommand{\fB}{\mathfrak{B}}
\newcommand{\RZ}{\mathbb{R}\mathcal{Z}}
\renewcommand{\Im}{\mathrm{Im}}
\newlength{\cellsize} \setlength{\cellsize}{18\unitlength}
\newsavebox{\cell}
\sbox{\cell}{%
\begin{picture}(18,18)\linethickness{0.6pt} %
  \put(0,0){\line(1,0){18}} \put(0,0){\line(0,1){18}}
  \put(18,0){\line(0,1){18}} \put(0,18){\line(1,0){18}}
\end{picture}}
\newcommand\cellify[1]{%
  \def\thearg{#1}\def\nothing{}%
  \ifx\thearg\nothing \vrule width0pt height\cellsize depth0pt\else
  \hbox to 0pt{\usebox{\cell} \hss}\fi%
  \vbox to \cellsize{ \vss \hbox to
  \cellsize{\hss$#1$\hss} \vss}
}
\newcommand\tableau[1]{\vtop{\let\\\cr
\baselineskip -16000pt \lineskiplimit 16000pt \lineskip 0pt
\ialign{&\cellify{##}\cr#1\crcr}}}
\newcommand\bas[1]{\omit \vbox to \cellsize{ \vss \hbox to \cellsize{\hss$#1$\hss} \vss}}
\title[Finite group representation on real toric spaces]{Geometric representations of finite groups on real toric spaces}
\author[S.~Cho]{Soojin Cho}
\address[S.~Cho]{Department of mathematics, Ajou University, 206, World cup-ro, Yeongtong-gu, Suwon 16499,  Republic of Korea}
\email{chosj@ajou.ac.kr}
\thanks{This work was supported by the Ajou University research fund.}
\author[S.~Choi]{Suyoung Choi}
\address[S.~Choi]{Department of mathematics, Ajou University, 206, World cup-ro, Yeongtong-gu, Suwon 16499,  Republic of Korea}
\email{schoi@ajou.ac.kr}
\thanks{The second named author was supported by Basic Science Research Program through the National Research Foundation of Korea(NRF) funded by the Ministry of Science, ICT \& Future Planning(NRF-2016R1D1A1A09917654).}
\author[S.~Kaji]{Shizuo Kaji}
\thanks{The third named author was partially supported by KAKENHI, Grant-in-Aid for Scientific Research (C) 18K03304.}
\address[S.~Kaji]{Institute of Mathematics for Industry, Kyushu University 744 Motooka, Nishi-ku, Fukuoka 819-0395, Japan}
\email{skaji@imi.kyushu-u.ac.jp}
\date{\today}
\begin{document}

\begin{abstract}
We develop a framework to construct geometric representations of finite groups $G$
through the correspondence between real toric spaces $X^\R$ and simplicial complexes with characteristic matrices.
We give a combinatorial description of the $G$-module structure of the homology of $X^\R$.
As applications, we make explicit computations of the Weyl group representations on the homology of
real toric varieties associated to the Weyl chambers
of type~$A$ and $B$, which show an interesting connection to the topology of posets.
We also realize a certain kind of Foulkes representation geometrically as the homology of real toric varieties.
\end{abstract}

\keywords{real toric variety, Weyl group, representation, poset topology, Specht module, building set, nestohedron}
\subjclass[2010]{primary 05E10, 55U10, 14M25, 20C30; secondary 05E25}

	
\maketitle

\tableofcontents

\section{Introduction}
The fundamental theorem of toric geometry states that there is a bijective correspondence between the class of toric varieties of complex dimension $n$ and the class of fans (consisting of strongly convex simplicial cones) in $\R^n$.
This correspondence has opened up fertile research areas which reveal rich interactions between geometry and combinatorics.
For example, many studies on the cohomology of
toric varieties associated to root systems are found in the literature such as \cite{Stanley1989,Procesi1990,Dolgachev-Lunts1994,Stembridge1994,Abe2015}.

The real locus $X^\R$ of a toric variety $X$ of complex dimension $n$,
which is the fixed point set of the canonical involution,
forms a real subvariety of real dimension $n$, and it is called a \emph{real toric variety}.
While many arguments for complex toric varieties apply verbatim, it turns out that
real toric varieties sometimes exhibit essentially different nature.
In \cite{CKT} a generalization of real toric variety called real toric space is introduced.
In the present paper, we develop a framework to construct geometric representations of finite groups
through the correspondence between \emph{real toric spaces} and \emph{simplicial complexes with characteristic matrices}.
More precisely, given an action of a finite group $G$ on a simplicial complex,
we describe a necessary and sufficient condition with which the action induces one on the corresponding real toric space $X^\R$.
Furthermore, the $G$-module structure of the homology of $X^\R$ is described combinatorially (Theorem~\ref{stable-action}).

As an application, we investigate real toric varieties associated to
Weyl chambers in Section~\ref{sec:real_toric_var_corr_to_Weyl}.
The result demonstrates sharp difference from the complex case studied in \cite{Stanley1989,Procesi1990,Dolgachev-Lunts1994,Stembridge1994}.
In particular, we compute the Betti numbers for some exceptional cases, and we describe explicitly the $W_R$-module structures of $H_\ast(X^\R_R)$ for $R=A_n$ and $B_n$.
Remarkably, the real cases of type~$A$ and of type~$B$ reveal an interesting connection to the topology of posets studied in
\cite{Solomon1968,Orlik-Solomon1980,Stanley1982,Hanlon1984, Sundaram1994,Rains2010} (see also \cite{Wachs2007} for survey).
We note that the real toric varieties associated to the Weyl groups and their representations were already considered in \cite{Henderson-Lehrer2009, Henderson2012}.
We work on these objects from different viewpoints.
Especially, we provide simple combinatorial description for the representations of real toric varieties associated to Weyl groups of types~$A$ and $B$ as sums of irreducible representations, whereas the known results in \cite{Henderson2012} for type~$A$ form alternating sums.
The representation for type~$B$ is unknown before.

In Section~\ref{sec:nestohedra},
we investigate in the opposite direction; given a group representation, we construct a
real toric space with an action of the group whose homology carries the representation.
This is a typical situation in geometric representation theory, one of whose objectives is to seek for
geometric objects realising particular representations.
Precisely, here we realize a certain kind of Foulkes representation as the homology of real toric varieties.
Our theory provides yet another application of toric topology in representation theory.

As we are primarily interested in finite Weyl groups,
and all their irreducible representations over $\C$ are
realized over $\Q$ (see \cite{Benard1971} and the references therein),
the coefficients of (co)homology in this paper is always taken to be the rational numbers $\Q$ unless otherwise stated.

\section{Finite group action on real toric spaces}
Real toric space is a topological generalization of (simplicial) real toric variety.
In this section, we briefly review the definitions and notations regarding real toric spaces,
following \cite{CP2} and \cite{CKT}.
Then, we discuss the representation of a finite group $G$
on the homology of a real toric space, which is induced by a $G$-action on combinatorial data.

Let $K$ be a simplicial complex on $m$-vertices $[m]=\{1, \ldots, m\}$.
The \emph{real moment-angle complex} $\RZ_K$ of $K$ is defined as
\begin{align*}
    \RZ_K &= (\underline{D^1},\underline{S^0})^K \\
          &= \bigcup_{\sigma\in K} \left\{(x_1,\dotsc,x_m)\in (D^1)^m \mid x_i \in S^0 \text{ when }i\notin \sigma\right\},
\end{align*}
where $D^1=[-1,1]$ is the closed interval and $S^0 = \{-1,1\}$ is its boundary.

Let $\Lambda \colon \Z_2^m \to \Z_2^n$ be a linear map, where $\Z_2$ is the field with two elements.
We regard $\Lambda$ as an $n\times m$-matrix with elements in $\Z_2$,
and call it a \emph{mod $2$ characteristic matrix}.
The diagonal sign action of $\Z_2^m$ on $(D^1)^m$ induces an action of $\ker{\Lambda} \subset \Z_2^m$ on $\RZ_K$.
The \emph{real toric space} $M^\R(K,\Lambda)$ corresponding to the pair $K$ and $\Lambda$ is by definition the quotient space $\RZ_K/\ker{\Lambda}$.

Indeed, a real (simplicial) toric variety is a real toric space.
Let $X$ be a (simplicial) toric variety, and $X^\R$ its real toric variety. By the fundamental theorem of toric geometry, there is a (simplicial) fan $\Sigma$ corresponding to $X$. Let $V$ be the set of rays of $\Sigma$ and assume
$V=\{v_1,\ldots,v_m\}$.
Then, the collection of subsets of $V$ constituting the cones of $\Sigma$ form a simplicial complex $K$ on $V$.
In addition,
the primitive vectors in the direction of rays define a linear map $\Lambda \colon \Z_2^m \to \Z_2^n$,
whose $i$th column is the mod $2$ reduction of $v_i$.
Then, $X^\R$ is homeomorphic to the real toric space $\RZ_K/\ker{\Lambda}$.

Denote by $\row(\Lambda)$ the subspace of $\Z_2^m$ spanned by the row vectors of $\Lambda$;
that is, $\row(\Lambda)=\Im(\Lambda^T)=(\ker\Lambda)^\perp$, where $(\ker\Lambda)^\perp$
is the orthogonal complement to $\ker\Lambda$.
Notice that $\row(\Lambda)=\row(A\Lambda)$ and $\ker\Lambda=\ker(A\Lambda)$ for any invertible
$n\times n$-matrix $A$; that is, it is independent of the choice of the basis of $\Z_2^n$.
Throughout the paper, we identify the power set of $[m]$ with $\Z_2^m$ in the standard way;
each element of $[m]$ corresponds to a coordinate of $\Z_2^m$.

The group of simplicial automorphisms $\mathrm{Aut}(K)$ of $K$ acts on the real moment-angle complex $\RZ_K$,
which is extensively studied by \cite{Al-Raisi2014}.
Our first theorem considers finite group representations on the cohomology of
real toric spaces, which connects combinatorics, topology, and representation theory.
\begin{thm}\label{stable-action}
    Suppose $G$ is a subgroup of $\mathrm{Aut}(K)$ which satisfies
    one of the following two equivalent conditions
\begin{enumerate}
  \item $G$ preserves $\ker\Lambda$; that is, for any $g\in G$,
  $\ker\Lambda=\ker\Lambda P_g^{-1}$, where $P_g$ is the permutation matrix $\Z_2^m\to \Z_2^m$ corresponding to $g$;
  \item for each element $g\in G$, there exists an $n\times n$-matrix $A_g$ such that $\Lambda P_g^{-1}=A_g\Lambda$.
\end{enumerate}
  Then, the action of $G$ on $\RZ_K$ induces one on $M^\R(K,\Lambda)$
  and we have the following isomorphism of $G$-modules
 \begin{equation}\label{isom}
  H_\ast(M^\R(K,\Lambda))\simeq \bigoplus_{S \in \row \Lambda} \widetilde{H}_{\ast-1}(K_S).  \tag{$\ast$}
 \end{equation}
\end{thm}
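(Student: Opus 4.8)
The plan is to pull everything back to a $\Z_2^m$-equivariant cellular model of $\RZ_K$ and then isolate $\ker\Lambda$-invariants over $\Q$. First I would record the purely linear-algebraic equivalence of (1) and (2). Since $P_g$ is invertible, the identity $\Lambda P_g^{-1}=A_g\Lambda$ forces every row of $\Lambda P_g^{-1}$ into $\row\Lambda$, so $\row(\Lambda P_g^{-1})\subseteq\row\Lambda$ with equality by a dimension count; dualizing via $\row=(\ker)^\perp$, this is exactly $\ker(\Lambda P_g^{-1})=\ker\Lambda$, which is (1). Conversely, (1) places each row of $\Lambda P_g^{-1}$ in $\row\Lambda$ and thereby produces the matrix $A_g$, giving (2).

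Next I would verify that the action descends. The group $\mathrm{Aut}(K)$ acts on $\RZ_K$ by permuting coordinates, $g\mapsto P_g$, while $\ker\Lambda\subseteq\Z_2^m$ acts by the diagonal sign action; inside the signed-permutation group $\Z_2^m\rtimes\mathfrak{S}_m$ acting on $(D^1)^m$, conjugation by $P_g$ sends the sign action of $\omega$ to that of $P_g\omega$. A short computation gives $\ker(\Lambda P_g^{-1})=P_g\ker\Lambda$, so condition (1) reads $P_g\ker\Lambda=\ker\Lambda$, i.e. $G$ normalizes $\ker\Lambda$; hence the $G$-action passes to the quotient $M^\R(K,\Lambda)=\RZ_K/\ker\Lambda$.

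For the homology I would build the splitting at the chain level so that both relevant symmetries are visible at once. Give $D^1=[-1,1]$ the CW structure with $0$-cells $\pm1$ and one $1$-cell; then $(D^1)^m$ is a cube complex and $\RZ_K$ a cubical subcomplex whose cells are indexed by a simplex $T\in K$ together with a sign vector on $[m]\setminus T$. On $C_\ast(\RZ_K;\Q)$ both the sign action of $\Z_2^m$ and the permutation action of $\mathrm{Aut}(K)$ are cellular. As $\Z_2^m$ is a finite $2$-group and $\Q$ carries its characters $\pm1$, the chain complex splits canonically into $\chi$-isotypic subcomplexes indexed by $S\subseteq[m]$ under $\widehat{\Z_2^m}\cong\Z_2^m$, and a bookkeeping of orientation signs identifies the $\chi_S$-isotypic piece with the shifted reduced simplicial chain complex of the full subcomplex $K_S$. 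This yields a $\Z_2^m$-equivariant isomorphism
\[
  H_\ast(\RZ_K;\Q)\cong\bigoplus_{S\subseteq[m]}\widetilde{H}_{\ast-1}(K_S),
\]
on which $\omega\in\Z_2^m$ acts on the $S$-summand by $(-1)^{\langle S,\omega\rangle}$. Because $\ker\Lambda$ is finite and $\Q$ has characteristic $0$, the transfer gives $H_\ast(M^\R(K,\Lambda);\Q)\cong H_\ast(\RZ_K;\Q)^{\ker\Lambda}$, and the $S$-summand survives exactly when $\langle S,\omega\rangle=0$ for all $\omega\in\ker\Lambda$, i.e. when $S\in(\ker\Lambda)^\perp=\row\Lambda$, establishing \eqref{isom} as vector spaces.

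It remains to make the isomorphism $G$-equivariant, and this is where I expect the main obstacle. An automorphism $g$ carries $K_S$ homeomorphically onto $K_{g(S)}$, so $\mathrm{Aut}(K)$ permutes the summands through its action on subsets; the crucial point is that the splitting above is natural for $\mathrm{Aut}(K)$, which is guaranteed precisely because it arises from the cellular decomposition on which $\mathrm{Aut}(K)$ acts cellularly rather than from an abstract vector-space isomorphism. Finally, condition (1) says $P_g$ preserves $\ker\Lambda$ and hence, being orthogonal, preserves $\row\Lambda$; thus $G$ permutes exactly the surviving summands $\{K_S : S\in\row\Lambda\}$ and \eqref{isom} is an isomorphism of $G$-modules. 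The delicate part throughout is arranging a single decomposition that is simultaneously $\Z_2^m$-equivariant, to select the quotient-invariant summands, and $\mathrm{Aut}(K)$-equivariant, to transport the $G$-module structure; the cubical chain model is what secures both at the same time.
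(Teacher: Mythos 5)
Your proof is correct, but it takes a genuinely different route from the paper's. The paper argues at the level of stable homotopy theory, importing three results: the splitting $H\colon \Sigma\RZ_K \simeq \bigvee_{S\subseteq[m]}\Sigma^2|K_S|$ of Bahri--Bendersky--Cohen--Gitler, Al-Raisi's theorem that this splitting is equivariant on homology for the $\mathrm{Aut}(K)$-action (with $g$ sending the $S$-summand to the $gS$-summand), and the Choi--Kaji--Theriault rational homotopy equivalence $\Sigma\bigvee_{S\in\row\Lambda}\Sigma|K_S|\to\Sigma M^\R(K,\Lambda)$ obtained by composing the inclusion of wedge summands, $H^{-1}$, and the suspended quotient map; the $G$-module isomorphism \eqref{isom} is then read off from the equivariance of each map in this composite. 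You instead work unstably at the chain level: the cubical structure on $\RZ_K\subset (D^1)^m$ makes both the sign action of $\Z_2^m$ and the permutation action of $\mathrm{Aut}(K)$ cellular, the $\Q$-chain complex splits canonically into character-isotypic subcomplexes indexed by $S\subseteq[m]$, each isotypic piece is identified with $\widetilde{C}_{\ast-1}(K_S;\Q)$, and the transfer for the finite group $\ker\Lambda$ selects exactly the summands with $S\in(\ker\Lambda)^\perp=\row\Lambda$; equivariance comes for free from the projector identity $g\pi_S g^{-1}=\pi_{gS}$, so you prove Al-Raisi's input rather than cite it. Your route is self-contained and elementary (essentially the Suciu/Choi--Park style computation of the homology of real toric spaces, upgraded to track the group action), whereas the paper's is shorter given the cited results and retains information at the space level (a stable splitting) rather than only in homology. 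Two points you compress would need to be spelled out in a full write-up: the identification of the $\chi_S$-isotypic subcomplex with the shifted reduced chain complex of $K_S$, including the orientation signs and the check that only cells $(T,\epsilon)$ with $T\subseteq S$ survive and that the differentials match; and the transfer isomorphism $H_\ast(\RZ_K/\ker\Lambda;\Q)\cong H_\ast(\RZ_K;\Q)^{\ker\Lambda}$ for this non-free cellular action, which requires either Bredon's theory of finite transformation groups or a barycentric subdivision making the action simplicial and regular. Both are standard, so these are presentational rather than mathematical gaps.
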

\begin{proof}
We first prove the equivalence of two conditions (1) and (2).
The row space $\row(\Lambda)$ and $\ker\Lambda$ are the orthogonal complements to
each other in $\Z_2^m$. Since the $G$-action on $\Z_2^m$ is an isometry,
$\ker\Lambda$ is $G$-stable if and only if $\row(\Lambda)$ is $G$-stable.
This, in turn, is equivalent to that each row is mapped to a linear combination of row vectors.

Assume one of (1) and (2) is satisfied.
Since $M^\R(K,\Lambda)$ is the quotient $\RZ_K/\ker{\Lambda}$,
the action induces one on $M^\R(K,\Lambda)$, which in turn induces a $G$-module
structure on $H_\ast(M^\R(K,\Lambda))$.
Define a map $H$ as the composition
\[
H: \Sigma\RZ_{K}\longrightarrow\bigvee_{S\subset [m]}\Sigma\RZ_{K_{S}}
     \longrightarrow
      \bigvee_{S\subset [m]}\Sigma^{2}\vert K_{S}\vert,
      \]
where $K_S$ is the induced subcomplex of $K$ with respect to $S\subset [m]$.
The map $H$ is shown to be a homotopy equivalence in \cite{BBCG2010}. Moreover,
it is shown to be $G$-equivariant on the homology in \cite[Corollary 2.3.11]{Al-Raisi2014},
  where the action of $g\in G$ on the target space is induced by $g\colon K_S \to K_{gS}$.
Recall from \cite{CKT} the rational homotopy equivalence
\[
\psi\colon \Sigma\bigvee_{S\in \row(\Lambda)}\Sigma\vert K_{S}\vert \xrightarrow{\Sigma \iota}
\bigvee_{S\subset [m]}\Sigma^{2}\vert K_{S}\vert \xrightarrow{H^{-1}}
\Sigma\RZ_{K} \xrightarrow{\Sigma q}
\Sigma M^\R(K,\Lambda),
\]
where $\iota$ is the inclusion and $q$ is the quotient.
As all the maps appearing in the definition of $\psi$ are equivariant on the (co)homology,
$\psi$ induces an isomorphism of $G$-representations \eqref{isom}.
\end{proof}


This theorem provides a handy way to construct various geometric representations of finite groups,
which can be described explicitly in a combinatorial way as we will see in later sections.


\begin{ex}
Let $K$ be a $4$-gon on $\{1,2,3,4\}$, and $G$ a cyclic group of order $4$ generated by $g$.
Consider the $G$-action on $K$ defined by
$$g \cdot i = \left\{
                \begin{array}{ll}
                  i+1, & \hbox{$i=1,2,3$;} \\
                  1, & \hbox{$i=4$.}
                \end{array}
              \right.
$$
Put $\Lambda_1:=\begin{pmatrix}
  1 & 0 & 1 & 1 \\ 0 & 1 & 0 & 1
\end{pmatrix} $ and $\Lambda_2:=\begin{pmatrix}
  1 & 0 & 1 & 0 \\ 0 & 1 & 0 & 1
\end{pmatrix} $.
Then, one can see that $G$ does not preserve $\ker\Lambda_1$ because
$$
    g \cdot \Lambda_1 = \begin{pmatrix}
  1 & 1 & 0 & 1 \\ 1 & 0 & 1 & 0
\end{pmatrix} $$
has a different row space from that of $\Lambda_1$.
On the other hand, $G$ preserves $\ker\Lambda_2$ because
$$
    g \cdot \Lambda_2 = g^3 \cdot \Lambda_2 = \begin{pmatrix}
  0 & 1 & 0 & 1 \\ 1 & 0 & 1 & 0
\end{pmatrix}  = \begin{pmatrix}
  0 & 1 \\ 1 & 0
\end{pmatrix} \Lambda_2,
$$ and $g^2 \cdot \Lambda_2 = g^4 \cdot \Lambda_2 = \Lambda_2$.
This action on the corresponding real toric space $M^\R(K,\Lambda_2)\simeq T^2$ is seen to be $g\colon T^2 \to T^2,\ g(x,y)=(-y,x)$.
On the homology,
$H_*(M^\R(K,\Lambda_2))$ is the trivial representations on degree $0$ and $2$,
and the two dimensional irreducible representation (over $\Q$) on degree $1$.
On the other hand by \eqref{isom},
we have $H_1(M^\R(K,\Lambda_2))\simeq \tilde{H}_0(\{1,3\})\oplus \tilde{H}_0(\{2,4\})$.
Since $g\{1,3\}=\{2,4\}$ and $g\{2,4\}=\{3,1\}$,
taking account the sign of the induced map $g_*$ on the reduced homology,
we can also see that it is the two dimensional irreducible representation.
\end{ex}

\section{Real toric varieties associated to Weyl chambers} \label{sec:real_toric_var_corr_to_Weyl}
An illustrative family of examples of Theorem~\ref{stable-action}
are obtained by the real toric varieties associated to Weyl chambers.
The complex toric varieties associated to Weyl chambers are studied
by \cite{Procesi1990,Stanley1989,Stembridge1994,Dolgachev-Lunts1994},
and the real toric variety associated to the type~$A_n$ Weyl chambers is
studied by \cite[Corollary~1.2]{Henderson2012}, \cite{Suciu2012} and \cite{Choi-Park2015}.
Here, we carry out explicit computations of the representation of the Weyl groups on the homology,
which contrast sharply with the complex toric case.

We fix some notations as follows;
\begin{itemize}
\item $\Phi$: an irreducible root system of type~$R$ and rank $n$.
\item $\Delta=\{\alpha_1,\ldots, \alpha_n\}$: a fixed set of simple roots of $\Phi$.
\item $\Omega=\{\omega_1,\ldots,\omega_n\}$: the set of fundamental co-weights.
That is, $(\omega_i,\alpha_j)=\delta_{ij}$ with respect to the inner product in the ambient space.
\item $W_R=\langle s_1,\ldots, s_n \rangle$: the Weyl group generated by the simple reflections acting on $\R\langle \Omega \rangle$.
\item $V_R=W_R\cdot \Omega=\{v_1,\ldots,v_N\}$: the set of rays spanning the chambers.
\item $K_R\subset 2^{V_R}$: the corresponding simplicial complex. It is called the \emph{Coxeter complex} of type~$R$ \cite[\S 1.15]{Humphreys1990}.
\item $\Lambda_R=(v_1,v_2,\ldots,v_N)$: the mod 2 characteristic matrix, where the columns are
the mod 2 coordinates of the rays.
We set $v_1=\omega_1,\ldots,v_n=\omega_n$ so that the first $n\times n$-submatrix is the identity.
\item $X^\R_R$: the real toric space $M^\R(K_R, \Lambda_R)$; it is indeed the real toric variety associated to the Weyl chambers.
\end{itemize}
We recall the combinatorial structure of the Coxeter complex $K_R$. See
\cite{Bjorner1984} for more details.
\begin{itemize}
\item The set of $0$-simplices is the $W_R$-orbit of the fundamental co-weights $W_R \cdot \Omega$.
The orbits $W_R \omega_i$ and $W_R \omega_j$ are disjoint if $i\neq j$.
 The stabilizer of $\omega_i$ is the parabolic subgroup $W_i$ generated by $\{s_j \mid j\neq i\}$.
 Hence, the set of vertices is described by
 \[
 V_R = \{w\omega_i\mid  w\in W_R/W_i, \omega_i\in \Omega\}.
 \]
\item Two vertices $u$ and $v$ form a $1$-simplex if and only if there exist $ w\in W_R$ and $\omega_i,\omega_j\in \Omega$ such that $wu=\omega_i, wv=\omega_j$.
Similarly, the set of $(k-1)$-simplices is
\[
\{ w\cdot L\mid L\subset \Omega, |L|=k, w\in W_R/W_L \},
\]
where $W_L$ is the stabilizer of $L=\{\omega_{i_1},\ldots, \omega_{i_k}\}$,
which is the parabolic subgroup of $W_R$ generated by $\{s_i \mid i\not\in L\}$.
In particular, maximal simplices are obtained by the free and transitive action of $W_R$ on the fundamental chamber.
\end{itemize}
\medskip

By definition, the Weyl group $W_R$ acts on $K_R$.
We compute the action of $W_R$ on the rows of $\Lambda_R$ explicitly.
\begin{lem} \label{lem:Weyl_group_acts_well}
The Weyl group $W_R$ acts on $K_R$ and preserves $\ker\Lambda_R$.
More precisely,
let $\Lambda_R^j\in \Z_2^m$ be the $j$th row of $\Lambda_R$, which corresponds the $\omega_j$ coordinates of the rays.
Then, we have
\[
(s_i(\Lambda_R))^j=\Lambda_R^j-c_{ij}\Lambda_R^i,
\]
where $c_{ij}=(\alpha_i^\vee, \alpha_j)$ are the entries of the Cartan matrix of $R$.
\end{lem}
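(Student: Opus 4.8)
The plan is to reduce the statement to the elementary behaviour of the simple reflection $s_i$ on the root/coweight pairing, after correctly interpreting what the ``action on the matrix'' means. Since $V_R=W_R\cdot\Omega$ is a union of $W_R$-orbits, $s_i$ acts on $K_R$ by permuting the vertices, and hence it permutes the columns of $\Lambda_R$; the resulting matrix $s_i(\Lambda_R)$ has its $k$th column equal to the mod~$2$ coordinate vector of $s_i v_k$. The first step is therefore to read off a single entry of $\Lambda_R$. Because the coweight lattice $\Z\langle\Omega\rangle$ is $W_R$-stable, every ray $v_k$ has integer coordinates in the basis $\Omega$, so the mod~$2$ reduction is well defined; and by the defining duality $(\omega_i,\alpha_j)=\delta_{ij}$, the $j$th coordinate of any $v\in\R\langle\Omega\rangle$ is exactly $(v,\alpha_j)$. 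Thus the $(j,k)$ entry of $\Lambda_R$ is $(v_k,\alpha_j)\bmod 2$.

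The heart of the proof is then a two-line computation. Since $s_i$ is an orthogonal transformation, I would write the $(j,k)$ entry of $s_i(\Lambda_R)$ as $(s_i v_k,\alpha_j)=(v_k,s_i\alpha_j)$, and substitute the reflection formula $s_i\alpha_j=\alpha_j-c_{ij}\alpha_i$ with $c_{ij}=(\alpha_i^\vee,\alpha_j)$. This yields $(s_i v_k,\alpha_j)=(v_k,\alpha_j)-c_{ij}(v_k,\alpha_i)$, whose two terms are precisely the $(j,k)$ and $(i,k)$ entries of $\Lambda_R$. Reading this off for every column $k$ and reducing mod~$2$ gives the claimed identity $(s_i(\Lambda_R))^j=\Lambda_R^j-c_{ij}\Lambda_R^i$.

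Finally, the preservation of $\ker\Lambda_R$ follows formally. The row identity exhibits every row of $s_i(\Lambda_R)$ as a $\Z_2$-linear combination of rows of $\Lambda_R$; in fact it provides the matrix $A_{s_i}$ of condition~(2) of Theorem~\ref{stable-action} explicitly, namely $(A_{s_i})_{jl}=\delta_{jl}-c_{ij}\delta_{il}$. Applying the same identity to $s_i(\Lambda_R)$ and using $s_i^2=1$ shows $\row(s_i\Lambda_R)=\row\Lambda_R$, so $\row\Lambda_R$ is $s_i$-stable. As the $s_i$ generate $W_R$, the whole group preserves $\row\Lambda_R$, equivalently $\ker\Lambda_R$ by the orthogonality observed in the proof of Theorem~\ref{stable-action}, and that theorem then applies.

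I expect the only genuine obstacle to be bookkeeping: making sure that the geometric column permutation induced by $s_i$ is correctly transposed into the row operation above (in particular the index placement in $c_{ij}$ versus $c_{ji}$, and the direction of the permutation $P_{s_i}$ relative to the convention $g\cdot\Lambda=\Lambda P_g^{-1}$), and checking that integrality guarantees that the mod~$2$ reduction commutes with each of these steps.
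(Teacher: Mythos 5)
Your proof is correct and takes essentially the same approach as the paper: both establish the row identity $(s_i(\Lambda_R))^j=\Lambda_R^j-c_{ij}\Lambda_R^i$ by an elementary computation of the reflection $s_i$ in coweight coordinates using the Cartan matrix, and then conclude that $\row(\Lambda_R)$, hence $\ker\Lambda_R$, is preserved since the $s_i$ generate $W_R$. The only cosmetic difference is that you compute matrix entries through the pairing $(s_i v_k,\alpha_j)=(v_k,s_i\alpha_j)$ and the formula $s_i\alpha_j=\alpha_j-c_{ij}\alpha_i$, whereas the paper computes $s_i(\omega_j)$ directly and expands $\alpha_i^\vee=\sum_j c_{ij}\omega_j$; these are the same calculation carried out on dual sides.
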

\begin{proof}
This is a special instance of Theorem~\ref{stable-action} since $W_R$ acts on $\R\langle \Omega \rangle$ linearly.
Recall that the action of the simple reflections on $\R\langle \Omega \rangle$ is given by
\[
s_i(\omega_j)=\omega_j-\dfrac{2(\alpha_i,\omega_j)}{(\alpha_i,\alpha_i)}\alpha_i
=\begin{cases} \omega_j & (i\neq j) \\ \omega_i-\alpha_i^\vee & (i=j) \end{cases},
\]
where $\alpha_i^\vee=\dfrac{2\alpha_i}{(\alpha_i,\alpha_i)}$ is the simple co-root.
Therefore, if we write $\alpha_i^\vee=\sum_j c_{ij} \omega_j$,
we have
\[
s_i(\sum_j d_j \omega_j) = \sum_j (d_j-d_ic_{ij})\omega_j.
\]
and $(s_i(\Lambda_R))^j=\Lambda_R^j-c_{ij}\Lambda_R^i\in \row(\Lambda_R)$.
\end{proof}

Now we state a theorem which gives a combinatorial description of the $W_R$ module structure
of the (co)homology of $X^\R_{R}$.
\begin{thm}\label{thm:Weyl_group_acts_well}
There is a $W_R$-module isomorphism
\[
    H^\ast(X^\R_{R})\cong H_\ast(X^\R_{R}) \cong \bigoplus_{S \in \row(\Lambda_{R})} \widetilde{H}_{\ast-1} (K_S).
\]
\end{thm}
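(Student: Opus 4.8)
The plan is to obtain the statement as a direct consequence of the general machinery developed above, supplemented by one classical feature of Weyl groups. The two isomorphisms in the statement have quite different flavours, so I would treat them separately, establishing the second (the combinatorial description) first and then deducing the first (the identification of cohomology with homology).

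For the second isomorphism, I would apply Theorem~\ref{stable-action} to the data $G=W_R$, $K=K_R$, and $\Lambda=\Lambda_R$. The only hypothesis to verify is that $W_R$ is a subgroup of $\mathrm{Aut}(K_R)$ preserving $\ker\Lambda_R$, and this is exactly the content of Lemma~\ref{lem:Weyl_group_acts_well}: the Weyl group acts on the Coxeter complex $K_R$ by simplicial automorphisms, and the explicit formula $(s_i(\Lambda_R))^j=\Lambda_R^j-c_{ij}\Lambda_R^i$ shows that each row of $\Lambda_R$ is mapped into $\row(\Lambda_R)$, so condition (1) of Theorem~\ref{stable-action} is met. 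The isomorphism \eqref{isom} then specialises to $H_\ast(X^\R_R)\cong\bigoplus_{S\in\row(\Lambda_R)}\widetilde H_{\ast-1}(K_S)$ as $W_R$-modules, where $W_R$ acts on the right-hand side through the induced maps $K_S\to K_{wS}$.

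For the first isomorphism, I would argue via duality over $\Q$. Since the coefficients form a field, the universal coefficient theorem supplies a natural isomorphism $H^k(X^\R_R;\Q)\cong\mathrm{Hom}_\Q(H_k(X^\R_R;\Q),\Q)$. The point requiring care is that this identification is $W_R$-equivariant, so that as a $W_R$-module $H^\ast(X^\R_R)$ is precisely the contragredient representation of $H_\ast(X^\R_R)$, rather than merely its abstract vector-space dual. Granting this, the theorem reduces to the assertion that every finite-dimensional $\Q W_R$-module $V$ is self-dual, $V\cong V^\ast$.

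This last assertion is where I would invoke the special nature of Weyl groups: every element $w\in W_R$ is conjugate to its inverse $w^{-1}$, as finite Coxeter groups are real (indeed strongly real, every element being a product of two reflections). Hence every character of $W_R$ is real-valued, giving $\chi_{V^\ast}(w)=\chi_V(w^{-1})=\chi_V(w)$ for all $w$; and since representations in characteristic zero are determined up to isomorphism by their characters, $V\cong V^\ast$. Applying this to $V=H_\ast(X^\R_R)$ completes the chain. I do not anticipate any serious obstacle: the genuine content lies in Theorem~\ref{stable-action} and Lemma~\ref{lem:Weyl_group_acts_well}, while the remainder is the standard self-duality of Weyl group representations. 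The only delicate points are verifying the $W_R$-equivariance of the universal-coefficient duality and checking that the character argument is valid over $\Q$ rather than merely over $\C$; the latter is handled by noting that a $\Q$-representation and its dual have equal characters and that characters determine $\Q$-representations up to isomorphism in characteristic zero.
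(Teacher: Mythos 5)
Your proposal is correct and follows essentially the same route as the paper's own proof: the second isomorphism comes from Theorem~\ref{stable-action} via Lemma~\ref{lem:Weyl_group_acts_well}, and the first from the self-duality of $W_R$-modules (real-valued characters) combined with the duality between $H^\ast(X^\R_R)$ and $H_\ast(X^\R_R)$. The extra care you take over $\Q$-rationality and the equivariance of the universal-coefficient pairing is sound but not needed beyond what the paper's two-line argument already asserts.
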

\begin{proof}
Since irreducible characters of $W_R$-modules are real-valued for any Weyl group $W_R$,
$W_R$-modules are self-dual.
As $H^\ast(X^\R_{R})$ and $H_\ast(X^\R_{R})$ are dual to each other as $W_R$-modules,
we obtain the first isomorphism.
The second isomorphism follows by Lemma~\ref{lem:Weyl_group_acts_well} and Theorem~\ref{stable-action}.
\end{proof}

In the following subsections, by using Lemma~\ref{lem:Weyl_group_acts_well} and Theorem~\ref{thm:Weyl_group_acts_well}, we compute the Betti numbers of $X^\R_R$ for some exceptional cases, and describe explicitly the $W_R$-module structures of $H_\ast(X^\R_R)$
for $R=A_n$ and $B_n$.
In particular, $H_\ast(X^\R_{A_n})$ is isomorphic to the famous Foulkes representation.

\subsection{Computation of Betti numbers}
As an interesting application of this explicit description, 
we compute the Betti numbers of $X^\R_R$ for some exceptional cases.
The (rational) Betti number of $X^\R_R$ has been computed when $R=A_n$ (in \cite{Henderson2012}, \cite{Suciu2012} and \cite{Choi-Park2015}),
$R=B_n$ (in \cite{CPP16}), and when $R=C_n$ and $D_n$ (in \cite{CKP}).
However, for exceptional types the Betti numbers have not been known except for $R=G_2$. 
It is known that $X^\R_{G_2}$ is homeomorphic to the connected sum of ten copies of the real projective plane.
In this subsection, we successfully apply our result to compute the Betti numbers for the cases
 $R=F_4$ and $E_6$.

In order to compute the Betti number of $X^\R_R$, it is enough to compute the rank of $H_\ast(K_S)$ for all $S \in \row(\Lambda_R)$ due to \eqref{isom}.
By Lemma~\ref{lem:Weyl_group_acts_well},
$K_S \simeq K_{gS}$ for $S\in \row(\Lambda_R)$ and $g\in W_R$.
Therefore,
we only have to consider representatives $K_S$ of the $W_R$-orbits in $\row(\Lambda_R)$,
which can be obtained by the explicit description of the $W_R$-action on the rows of $\Lambda$ given in Lemma~\ref{lem:Weyl_group_acts_well}.
The reduced Betti number $\widetilde{\beta}_\ast(K_S)$ of $K_S$ is computable by a computer program
 (for example, CHomP \cite{chomp}) within reasonable time for $R$ of small rank.
By exploiting this symmetry, we obtained the following result with the aid of computer
(the Maple code used to produce the result is available at \url{https://github.com/shizuo-kaji/WeylGroup}).
\begin{prop} \label{prop:bettinumber_F4_E6}
The Betti numbers of $X^\R_R$ for $R=G_2,F_4,E_6$ are given as follows:
    $$ \beta_i(X^\R_{G_2}) = \left\{
                                 \begin{array}{ll}
                                   1, & \hbox{$i=0$;} \\
                                   9, & \hbox{$i=1$;} \\
                                   0, & \hbox{otherwise,}
                                 \end{array}
                               \right. \quad
\beta_i(X^\R_{F_4}) = \left\{
                                 \begin{array}{ll}
                                   1, & \hbox{$i=0$;} \\
                                   57, & \hbox{$i=1$;} \\
                                   264, & \hbox{$i=2$;} \\
                                   0, & \hbox{otherwise,}
                                 \end{array}
                               \right.
    \quad \text{ and }
$$
$$
\beta_i(X^\R_{E_6}) = \left\{
                                 \begin{array}{ll}
                                   1, & \hbox{$i=0$;} \\
                                   36, & \hbox{$i=1$;} \\
                                   1323, & \hbox{$i=2$;} \\
                                   4392, & \hbox{$i=3$;} \\
                                   0, & \hbox{otherwise.}
                                 \end{array}
                               \right.
    $$
\end{prop}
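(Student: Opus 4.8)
The plan is to turn the combinatorial description of Theorem~\ref{thm:Weyl_group_acts_well} into a finite, computer-assisted calculation, using the $W_R$-symmetry to keep the number of homology computations small. By the isomorphism \eqref{isom} we have
\[
\beta_i(X^\R_R) = \sum_{S \in \row(\Lambda_R)} \widetilde{\beta}_{i-1}(K_S),
\]
so the first step is to enumerate $\row(\Lambda_R)$. Since it is the $\Z_2$-span of the $n$ rows of $\Lambda_R$, it has exactly $2^n$ elements, which for $R=G_2,F_4,E_6$ is $4,16,64$ respectively. Using Lemma~\ref{lem:Weyl_group_acts_well} I would implement the action of each simple reflection on the rows via $(s_i(\Lambda_R))^j=\Lambda_R^j-c_{ij}\Lambda_R^i$, read off the Cartan entries $c_{ij}$ for the relevant type, and thereby generate the whole $W_R$-action on $\row(\Lambda_R)$.

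The second step is the symmetry reduction. Each $g\in W_R$ acts as a simplicial automorphism of $K_R$ carrying the vertex subset $S$ to $gS$, so $K_S$ and $K_{gS}$ are isomorphic and thus $\widetilde{\beta}_\ast(K_S)=\widetilde{\beta}_\ast(K_{gS})$; moreover Lemma~\ref{lem:Weyl_group_acts_well} guarantees $gS\in\row(\Lambda_R)$, so the $W_R$-action really does permute the index set. Partitioning $\row(\Lambda_R)$ into $W_R$-orbits, the Betti number becomes
\[
\beta_i(X^\R_R) = \sum_{[S]} |W_R\cdot S|\cdot \widetilde{\beta}_{i-1}(K_S),
\]
the sum now running only over orbit representatives, with orbit sizes $|W_R\cdot S|$ supplied by the orbit--stabilizer theorem. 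In particular the orbit of the zero vector contributes $\widetilde{\beta}_{-1}(\emptyset)=1$ in degree $0$ and nothing elsewhere, which accounts for the entry $\beta_0=1$ in every case.

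The final step is the actual computation of $\widetilde{\beta}_\ast(K_S)$ for each representative. For this I would build $K_S$ explicitly as an abstract simplicial complex from the description of $K_R$ recalled above (its simplices are the sets $w\cdot L$ with $L\subset\Omega$ and $w\in W_R/W_L$), restrict to the vertices indexed by the support of $S$, and pass the result to a simplicial homology engine such as CHomP~\cite{chomp}. The $G_2$ entry then serves as a consistency check against the known homeomorphism of $X^\R_{G_2}$ with the connected sum of ten real projective planes, whose rational Betti numbers are $1,9,0$.

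The step I expect to be the main obstacle is the last one, and the difficulty is computational rather than conceptual. The number of vertices of $K_R$ grows like $\sum_i |W_R|/|W_i|$ (already over a thousand for $E_6$), so the induced subcomplexes $K_S$ with large support can be very large and their homology costly to compute directly. The symmetry reduction of the second step is exactly what makes the problem tractable, collapsing the $2^n$ subsets to a handful of orbit representatives; the remaining care is to verify that the orbit sizes are computed correctly so that the weighted sum reproduces the tabulated Betti numbers, and to ensure the homology package certifies the ranks over $\Q$.
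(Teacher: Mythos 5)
Your proposal follows essentially the same route as the paper's own proof: enumerate the $2^n$ elements of $\row(\Lambda_R)$, use Lemma~\ref{lem:Weyl_group_acts_well} to partition them into $W_R$-orbits, and compute $\widetilde{\beta}_\ast(K_S)$ for one representative per orbit by computer, summing with orbit-size multiplicities (the paper records the resulting orbit data, e.g.\ for $E_6$ the orbits of sizes $27$ and $36$ with $\widetilde{\beta}_1=49$ and $\widetilde{\beta}_0=1,\ \widetilde{\beta}_2=122$ respectively). Your handling of the zero vector contributing $\widetilde{\beta}_{-1}(\emptyset)=1$ to $\beta_0$ and your count of $|\row(\Lambda_R)|=2^n$ (valid since the first $n\times n$ block of $\Lambda_R$ is the identity) are both correct.
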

\begin{proof}
For type~$G_2$, there are three non-zero elements in $\row(\Lambda_{G_2})$.
These are all in the same orbit, and, for the first row $S$ of $\Lambda_{G_2}$, $K_S$ consists of $4$ distinct points,
and hence, $\widetilde{\beta}_0(K_S) = 3$.
Therefore, the first reduced Betti number $\widetilde{\beta}_1(X^\R_{G_2})$ of $X^\R_{G_2}$ is $3 \times 3 = 9$.

For type~$F_4$ there are $15$ non-zero elements in $\row(\Lambda_{F_4})$ and they are partitioned into two orbit types;
$12$ elements are in the orbit generated by the first row of $\Lambda_{F_4}$
($\widetilde{\beta}_0 (K_S)=1, \widetilde{\beta}_1(K_S)=22$)
and $3$ elements are in the orbit generated by the third row ($\widetilde{\beta}_0(K_{S})=15$).

For type~$E_6$ there are $63$ non-zero elements in $\row(\Lambda_{E_6})$. They are partitioned into two orbit types; $27$ elements are in the orbit generated by the first row ($\widetilde{\beta}_1(K_{S})=49$) and $36$ elements are in the orbit generated by the sum of the first and the fourth row ($\widetilde{\beta}_0(K_{S})=1, \widetilde{\beta}_2(K_{S})=122$).
\end{proof}

\begin{rem} \label{rem:h-vector_of_exceptional}
  It is well-known the $i$th $\Z_2$-Betti number of a real toric variety $X^\R$ is equal to the $i$th component of the $h$-vector of its underlying complex $K$ (see \cite{Davis-Januszkiewicz1991}). We denote by $h_q(K)$ the $h$-polynomial of a simplicial complex $K$, and by $\chi(X^\R)$ the Euler characteristic number of $X^\R$.
  Here is the list of the $h$-polynomials of $K_R$ for exceptional types $R$:
  \begin{align*}
    h_q(K_{G_2}) =& q^2+10q+1, \\
    h_q(K_{F_4}) =& q^4+236q^3+678q^2+236q+1,  \\
    h_q(K_{E_6}) =&q^6+1272 q^5+12183q^4+24928q^3+12183q^2+1272q+1,  \\
    h_q(K_{E_7}) =&q^7+17635q^6+309969q^5+1123915q^4+1123915q^3+309969q^2+17635q+1, \\
    h_q(K_{E_8}) =&q^8+881752q^7+28336348q^6+169022824q^5+300247750q^4+169022824q^3 \\ & +28336348q^2+881752q+1.
  \end{align*}
  We recall that the Euler characteristic does not depend on the field with respect to which cohomology is taken. Therefore, by using
  above, we can compute the Euler characteristic numbers as
  \[
   \chi(X^\R_{G_2}) = -8, \chi(X^\R_{F_4}) = 208, \chi(X^\R_{E_6}) = -3104,
  \chi(X^\R_{E_7}) = 0, \text{ and } \chi(X^\R_{E_8}) = 17111296.
  \]
  The Euler characteristic numbers of $X^\R_{G_2},X^\R_{F_4}$, and $X^\R_{E_6}$ can be also computed from Proposition~\ref{prop:bettinumber_F4_E6}.

For type~$E_7$, there are three orbit types; $63$ elements are in the orbit generated by the first row of $\Lambda_{E_7}$, $63$ elements are in the orbit generated by the sum of the fifth and the seventh rows, and one element in the orbit generated by the sum of the first, the fifth and the seventh rows.
For type~$E_8$, there are two orbit types; $120$ elements are in the orbit generated by the first row of $\Lambda_{E_8}$, $135$ elements are in the orbit generated by the sum of the sixth and the eighth rows.
However, we are currently unable to compute the reduced Betti numbers of $K_S$ since it is too big.
\end{rem}

\begin{rem}
  It should be noted that $K_S$ also determines the torsions in (co)homology except for $2$-torsion. More precisely, by \cite{CP2} and \cite{Cai-Choi_even_torsion}, we know that, for $k>1$ and odd number $q$,
  \begin{align*}
H^\ast(M(K,\Lambda);\Z_{2^k}) \cong& \bigoplus_{ S \in \row( \Lambda )} \widetilde{H}^{\ast-1} (K_S; \Z_{2^{k-1}}), \quad \text{ and }\\
H^\ast(M(K,\Lambda);\Z_q) \cong& \bigoplus_{ S \in \row( \Lambda )} \widetilde{H}^{\ast-1} (K_S; \Z_q).
  \end{align*}
  With the help of computer program, one can easily check that each $K_S$ in Proposition~\ref{prop:bettinumber_F4_E6} is torsion-free, so $X^\R_{R}$ for $R=G_2, F_4, E_6$ has no torsion in (co)homology other than $2$-torsion. Therefore, by Proposition~\ref{prop:bettinumber_F4_E6} and Remark~\ref{rem:h-vector_of_exceptional}, one can apply the universal coefficient theorem to compute the integral (co)homology group for $X^\R_{R}$ for $R=G_2, F_4, E_6$.
\end{rem}

\subsection{Type $A_n$ representation} \label{subsec:Type_A}
When $R=A_n$, the Weyl group $W_{A_n}$ is the symmetric group $\Sym_{n+1}$
and $K_{A_n}$ has a nice description as the dual of the permutohedron of order $n+1$.
 The map $V_{A_n}\to 2^{[n+1]}\setminus\{ [n+1], \emptyset \}$ sending $w\omega_i\to \{w(1),w(2),\ldots,w(i)\}$
 induces an $\Sym_{n+1}$-equivariant bijection.
 Under this identification, $J_1,\ldots,J_k\subset [n+1]$ form a simplex if and only if
 they form a nested chain of subsets up to permutation.
 To represent $\Lambda_{A_n}$, we use the basis consisting of $\e_k:=(1,k)\omega_1$ for $1\le k\le n$, where $(1,k)\in \Sym_{n+1}$ is the transposition.
 The coordinates of $J\in V_{A_n}$ with this basis are given by $\sum_{k\in J} \e_k$, where we set $\e_{n+1}=\sum_{i=1}^n\e_i$.

Note that Theorem~\ref{thm:Weyl_group_acts_well}
 specializes in this case to
giving an $\Sym_{n+1}$-module isomorphism $H_\ast(X^\R_{A_n}) \cong \bigoplus_{S \in \row(\Lambda_{A_n})} \widetilde{H}_{\ast-1} (K_S)$.
Thus, in order to compute $H_\ast(X^\R_{A_n})$, we have to investigate each $K_S$.
We establish a correspondence between
$\row(\Lambda_{A_n})$ and the even cardinality subsets of $[n+1]$ in the following way.
Any element $S\in \row(\Lambda_{A_n})$ is a linear sum of rows of $\Lambda_{A_n}$.
If it is a sum of an even number of rows, say $i_1,i_2,\ldots,i_{2r}$th rows,
then we associate $I_S =\{i_1,i_2,\ldots,i_{2r}\}\subset [n+1]$.
If it is a sum of an odd number of rows, say $i_1,i_2,\ldots,i_{2r-1}$th rows,
then we associate $I_S =\{i_1,i_2,\ldots,i_{2r-1}, n+1\}\subset [n+1]$.

One can see that $K_S$ is the full-subcomplex of $K_{A_n}$ consisting of the vertices $J$ such that $|J \cap I_S|$ is odd. By \cite[Lemma~5.3]{Choi-Park2015},
one can inductively remove $J \not\subset I_S$ from $K_S$ without changing the homotopy type of $K_S$.
This gives a homotopy equivalence between $K_S$ and the poset complex associated to
the odd rank-selected Boolean algebra $\cB_{I_S}^{odd}$ over $I_S$,
which induces an $\Sym_{n+1}$-module isomorphism
\begin{equation}\label{homology_A}
H_*(X^\R_{A_n}) \cong \bigoplus_{I\in {[n+1]\choose 2r}} \widetilde{H}_{*-1} (\cB_{I}^{odd}).
\end{equation}
Before we describe the $\Sym_{n+1}$-module structure of $H_*(X^\R_{A_n})$ in detail, we review some basic facts on the representations of symmetric groups. A reference for basics of the representation theory of symmetric groups is \cite{Sagan2001book}. A \emph{partition} $\lambda$ of a positive integer $k$, denoted by $\lambda \vdash k$, is a non-increasing sequence $\lambda=(\lambda_1, \dots, \lambda_\ell)$ of positive integers such that $\sum_i \lambda_i=k$ and we let $|\lambda|=k$. The \emph{diagram} of a partition $\lambda$ is the left-justified array of $\lambda_i$ boxes in the $i$th row, and a \emph{standard tableau} of shape $\lambda$ is a filling of boxes in the diagram of $\lambda$ with numbers $1, 2, \dots, |\lambda|$ whose rows are increasing from left to right and columns are increasing from top to bottom. The number of standard tableaux of shape $\lambda$ is denoted by $f^\lambda$. For two partitions $\lambda,\mu$ such that $\mu_i\leq \lambda_i$ for all $i$, $\mu\subseteq \lambda$ in notation, the diagram of $\lambda/\mu$ is the diagram of $\lambda$ with the first $\mu_i$ boxes deleted in the $i$th row for all $i$.
An entry $i$ of a standard tableau is a \emph{descent} if $i+1$ is located in a lower row than the row where $i$ is located. The set of descents of a standard tableau $T$ is called the \emph{descent set of $T$}.

Irreducible representations of the symmetric group $\Sym_{k}$ are indexed by the partitions of $k$. For a partition  $\lambda=(\lambda_1, \dots, \lambda_\ell)$ of $k$, corresponding irreducible $\Sym_k$-module $S^\lambda$ is called the \emph{Specht module}, and it is known that
$\dim S^\lambda = f^\lambda$.

Each direct summand on the right hand side of \eqref{homology_A} was computed by Solomon and also by Stanley:

\begin{thm}[{\cite{Solomon1968,Stanley1982}}]\label{thm:skew_Specht}
Let $Q \subset [m-1]$. Then
the homology of the $Q$-rank-selected poset $\cB_{[m]}^{Q}$ is given, as an $\Sym_m$-module, by
\[
\widetilde{H}_\ast(\cB_{[m]}^{Q})\cong \begin{cases} {\displaystyle \bigoplus_{\nu} c_{Q,\nu} S^\nu} & (\ast=|Q|-1) \\ 0 & (\ast \neq |Q|-1), \end{cases}
\]
where $c_{Q,\nu}$ is the number of standard tableaux of shape $\nu$ with  descent set $Q$.

In particular,
by setting $Q=\{1,3,\ldots, 2r-1\}$,
 we have an $\Sym_{[2r]}$-module isomorphism
\[
\widetilde{H}_\ast(\cB_{[2r]}^{odd})\cong \begin{cases} {\displaystyle \bigoplus_{\nu} c_{Q,\nu} S^\nu} & (\ast=r-1) \\ 0 & (\ast \neq r-1). \end{cases}
\]
\end{thm}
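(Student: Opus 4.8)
The plan is to reduce the claimed isomorphism of $\Sym_m$-modules to an identity of symmetric functions via the Frobenius characteristic $\mathrm{ch}$, exploiting the fact that $\cB_{[m]}^{Q}$ is Cohen--Macaulay. First I would recall that the order complex of the proper part of the Boolean lattice is shellable (it is the barycentric subdivision of the boundary of a simplex) and that rank-selection preserves Cohen--Macaulayness (see \cite{Wachs2007}); hence the order complex of $\cB_{[m]}^{Q}$, which has dimension $|Q|-1$, has reduced homology concentrated in the single top degree $\ast=|Q|-1$. This alone gives the vanishing statement for $\ast\neq|Q|-1$ and reduces the problem to identifying one genuine $\Sym_m$-module, namely $\widetilde{H}_{|Q|-1}(\cB_{[m]}^{Q})$.

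Next I would compute its class in the representation ring by an equivariant Euler characteristic. For $T=\{t_1<\cdots<t_k\}\subseteq[m-1]$ write $\alpha(T)=(t_1,t_2-t_1,\dots,m-t_k)$ for the associated composition of $m$, and let $M^{\alpha(T)}=\mathrm{Ind}_{\Sym_{\alpha(T)}}^{\Sym_m}\mathbf{1}$ be the corresponding permutation module, so that $\mathrm{ch}(M^{\alpha(T)})=h_{\alpha(T)}$. The chains of $\cB_{[m]}^{Q}$ with prescribed rank set $T\subseteq Q$ are exactly the flags of subsets of cardinalities $t_1<\cdots<t_k$; these form a single transitive $\Sym_m$-set whose point stabilizer is the Young subgroup $\Sym_{\alpha(T)}$, so the $i$-th augmented simplicial chain group is $\bigoplus_{T\subseteq Q,\,|T|=i+1}M^{\alpha(T)}$. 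Applying the Hopf trace formula together with the concentration of homology, I obtain
\[
\mathrm{ch}(\widetilde{H}_{|Q|-1}(\cB_{[m]}^{Q}))=\sum_{T\subseteq Q}(-1)^{|Q|-|T|}\,h_{\alpha(T)}.
\]

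It then remains to expand the right-hand side in the Schur basis. Using the standardization bijection one has $h_{\alpha(T)}=\sum_{\nu}d_{T,\nu}\,s_\nu$, where $d_{T,\nu}$ is the number of standard tableaux of shape $\nu$ whose descent set is contained in $T$; writing $d_{T,\nu}=\sum_{D\subseteq T}c_{D,\nu}$ and performing M\"obius inversion over the Boolean lattice of subsets of $Q$ collapses the alternating sum to $\sum_{\nu}c_{Q,\nu}\,s_\nu$. Applying $\mathrm{ch}^{-1}$ then yields $\widetilde{H}_{|Q|-1}(\cB_{[m]}^{Q})\cong\bigoplus_{\nu}c_{Q,\nu}S^\nu$, and the final ``odd'' specialization is immediate on taking $m=2r$ and $Q=\{1,3,\dots,2r-1\}$, for which $|Q|=r$ and the homology sits in degree $r-1$. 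The one genuinely substantive input is the Cohen--Macaulay concentration of homology in the top degree: without it the equivariant Euler characteristic would determine only a virtual representation, and it is this structural fact, rather than the purely formal symmetric-function bookkeeping of the last step, that I expect to be the crux of the argument.
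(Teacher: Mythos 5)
Your argument is correct, but note that the paper offers no proof of this statement at all: it is quoted as a classical result with a citation to Solomon (1968) and Stanley (1982). What you have written is essentially the standard proof from those sources (as also presented in Wachs's survey): rank-selection of the shellable Boolean lattice gives Cohen--Macaulayness and hence homology concentrated in degree $|Q|-1$, the Hopf trace formula turns the chain complex of flag permutation modules $M^{\alpha(T)}$ into an identity of Frobenius characteristics, and the standardization bijection $h_{\alpha(T)}=\sum_\nu d_{T,\nu}s_\nu$ together with M\"obius inversion over subsets of $Q$ yields $\sum_\nu c_{Q,\nu}s_\nu$ -- so your proposal faithfully reconstructs the cited proof rather than deviating from it.
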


\begin{rem}[{See \cite[Theorem 3.4.4]{Wachs2007}}]\label{rmk:skew_Specht}
The module $\bigoplus_{\nu} c_{Q,\nu} S^\nu$ that appears in Theorem~\ref{thm:skew_Specht} can be understood as
a \emph{skew Specht module} $S^{\lambda/\mu}$ for some skew hook $\lambda/\mu$.
A skew hook is an edge-wise connected skew Young diagram which contains no $(2\times 2)$-subdiagram.
A skew hook with $k$ boxes is determined by its \emph{descent set} $S\subset [k-1]$;
when one fills a skew hook with numbers $1$ through $k$ in order from southwest to northeast, $i$ is a \emph{descent} of the skew hook if $i+1$ is placed above $i$.
A skew Specht module corresponding to a skew hook is often referred to as the \emph{Foulkes representation}.
Its decomposition into irreducible modules (Specht modules of ordinary shape) follows from the Littlewood-Richardson rule,
and $c_{Q,\nu}$ is seen to be the Littlewood-Richardson number $c_{\mu \nu}^\lambda$.
See \cite{Sagan2001book} for the description of Littlewood-Richardson numbers.

In particular,
\[
\widetilde{H}_{r-1}(\cB_{[2r]}^{odd})\cong  S^{\lambda_r/\mu_r}
\] when $\lambda_r=(r, r, r-1, r-2, \dots, 2, 1)$, and $\mu_r=(r-1, r-2, \dots, 2, 1)$. For example, the diagram of the skew hook $\lambda_2/\mu_2$ is\quad   $\tableau{
        & \hspace{.3cm} \\
        \hspace{.3cm}  & \hspace{.3cm}  \\
         \hspace{.3cm}&
   }$\,\,,
whose descent set is $\{1,3\}$.
\end{rem}
\medskip

Let $\Sym_I$ be the stabilizer subgroup of $\Sym_{n+1}$ fixing all $i\not\in I$.
Then $\Sym_I$ preserves $\widetilde{H}_*(\cB_{I}^{odd})$
and $\Sym_{\bar{I}}$ acts on $\widetilde{H}_*(\cB_{I}^{odd})$ trivially, where $\bar{I}=[n+1] \setminus I$.
Moreover, each $I\in {[n+1]\choose 2r}$ is in one to one correspondence with a coset $\sigma (\Sym_{\{1, \dots, 2r\}} \times \Sym_{\{2r+1, \dots, n+1\}})$ in $\Sym_{n+1}$ via $I=\sigma \{1, \dots, 2r\}$. This proves the following.

\begin{thm}\label{thm:induced}
The $r$th homology $H_r(X^\R_{A_n})$ of $X^\R_{A_n}$ with the natural action of $\Sym_{n+1}$, is isomorphic to the induced representation
${\rm Ind}_{ \Sym_{\{1, \dots, 2r\}} \times \Sym_{\{2r+1, \dots, n+1\}}}^{\Sym_{n+1}}(\widetilde{H}_{r-1}(\cB_{[2r]}^{odd})\otimes S^{(n-2r)})$ of $\Sym_{n+1}$,
where $S^{(n-2r)}$ is the trivial representation of $\Sym_{\{2r+1, \dots, n+1\}}$.
\end{thm}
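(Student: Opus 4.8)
The plan is to recognize the right-hand side of \eqref{homology_A} as an induced representation by exploiting the transitive action of $\Sym_{n+1}$ on the index set ${[n+1]\choose 2r}$ of even-cardinality subsets. The main tool is the standard description of a module whose summands are permuted transitively: if a finite group $G$ acts transitively on a $G$-set $X$ with $H=\mathrm{Stab}_G(x_0)$, and $V=\bigoplus_{x\in X}V_x$ is a $G$-module for which each $g\in G$ restricts to an isomorphism $V_x\xrightarrow{\sim}V_{gx}$, then $V\cong {\rm Ind}_H^G V_{x_0}$. I would state this as a short lemma (or cite it as folklore) and apply it with $X={[n+1]\choose 2r}$, basepoint $x_0=I_0:=\{1,\dots,2r\}$, and summands $V_I=\widetilde{H}_{r-1}(\cB_I^{odd})$.

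First I would verify the permutation hypothesis. By \eqref{homology_A} together with the way the $\Sym_{n+1}$-action arises from Theorem~\ref{stable-action}---the action on each $K_S$ is induced by $g\colon K_S\to K_{gS}$---each $g\in\Sym_{n+1}$ carries the summand $\widetilde{H}_{r-1}(\cB_I^{odd})$ isomorphically onto $\widetilde{H}_{r-1}(\cB_{gI}^{odd})$, since $gI_S=I_{gS}$. The action of $\Sym_{n+1}$ on ${[n+1]\choose 2r}$ is transitive, and the stabilizer of $I_0$ is precisely the Young subgroup $H=\Sym_{\{1,\dots,2r\}}\times\Sym_{\{2r+1,\dots,n+1\}}$, with $I=\sigma I_0$ for a coset representative $\sigma$ as noted just before the statement.

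Next I would pin down the $H$-module structure of the distinguished summand $\widetilde{H}_{r-1}(\cB_{I_0}^{odd})$. The factor $\Sym_{\{1,\dots,2r\}}=\Sym_{I_0}$ preserves $\cB_{I_0}^{odd}$ and acts on its homology; identifying $I_0$ with $[2r]$ this is exactly the natural $\Sym_{[2r]}$-module $\widetilde{H}_{r-1}(\cB_{[2r]}^{odd})$ of Theorem~\ref{thm:skew_Specht}. The complementary factor $\Sym_{\bar{I_0}}=\Sym_{\{2r+1,\dots,n+1\}}$ fixes every element of $I_0$, hence fixes $\cB_{I_0}^{odd}$ and acts trivially on its homology---the observation already recorded before the statement. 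Therefore, as an $H$-module,
\[
\widetilde{H}_{r-1}(\cB_{I_0}^{odd})\cong \widetilde{H}_{r-1}(\cB_{[2r]}^{odd})\otimes S^{(n-2r)},
\]
where $S^{(n-2r)}$ is the trivial representation of $\Sym_{\{2r+1,\dots,n+1\}}$. Applying the induction criterion then yields the claimed isomorphism.

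The computations here are essentially bookkeeping; the points that require care are conceptual rather than numerical. The step most in need of attention is confirming that the $\Sym_{n+1}$-action genuinely permutes the summands in the imprimitive fashion demanded by the criterion---that is, that the equivariant homotopy equivalences used to derive \eqref{homology_A} identify the action on $\widetilde{H}_{r-1}(\cB_I^{odd})$ with the geometric action $g\colon K_S\to K_{gS}$, so that no twist or sign is introduced when passing to coset representatives. Once this compatibility is in hand, the identification of the stabilizer's action on the distinguished summand as the tensor product $\widetilde{H}_{r-1}(\cB_{[2r]}^{odd})\otimes S^{(n-2r)}$ is immediate, and the proof concludes by a direct appeal to the induced-representation criterion.
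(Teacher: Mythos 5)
Your proposal is correct and takes essentially the same approach as the paper: the paper's proof consists precisely of the observations that $\Sym_I$ preserves $\widetilde{H}_*(\cB_I^{odd})$ while $\Sym_{\bar{I}}$ acts trivially on it, and that the subsets $I\in {[n+1]\choose 2r}$ correspond to cosets of the Young subgroup $\Sym_{\{1,\dots,2r\}}\times\Sym_{\{2r+1,\dots,n+1\}}$, from which the induced-module structure of \eqref{homology_A} follows. You simply make explicit the folklore induction-recognition lemma (a transitively permuted direct sum is induced from the stabilizer) that the paper leaves implicit.
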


The irreducible decomposition is given by a standard argument:

\begin{cor}\label{cor:typeA}
Let $Q=\{1, 3, \dots, 2r-1\}$ and let $c_{Q, \nu}$ be the number of standard tableaux of shape $\nu$ with descent set $Q$. Then, we have
\[
H_r(X^\R_{A_n})\cong \bigoplus_{\eta \vdash (n+1) } \left(\sum_{\nu} c_{Q,\nu}\right) S^\eta,
\]
where $\nu$ runs over all partitions of $2r$ that are contained in $\eta$, and $\eta/\nu$ has at most one box in each column.
\end{cor}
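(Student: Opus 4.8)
The plan is to substitute the Specht-module decomposition of Theorem~\ref{thm:skew_Specht} into the induced representation supplied by Theorem~\ref{thm:induced}, and then to evaluate the resulting induction by the Pieri rule. Concretely, with $Q=\{1,3,\dots,2r-1\}$, Theorem~\ref{thm:skew_Specht} gives an $\Sym_{\{1,\dots,2r\}}$-module isomorphism $\widetilde{H}_{r-1}(\cB_{[2r]}^{odd})\cong\bigoplus_{\nu\vdash 2r} c_{Q,\nu}S^\nu$. Since induction is an additive functor, plugging this into Theorem~\ref{thm:induced} and pulling the direct sum through the induction would yield
\[
H_r(X^\R_{A_n})\cong\bigoplus_{\nu\vdash 2r} c_{Q,\nu}\,\mathrm{Ind}_{\Sym_{\{1,\dots,2r\}}\times\Sym_{\{2r+1,\dots,n+1\}}}^{\Sym_{n+1}}\bigl(S^\nu\otimes \mathbf{1}\bigr),
\]
where $\mathbf{1}$ denotes the trivial representation of $\Sym_{\{2r+1,\dots,n+1\}}$, a symmetric group on $n+1-2r$ letters.

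The key step is to decompose each summand $\mathrm{Ind}(S^\nu\otimes \mathbf{1})$ into irreducibles. Because the second factor is the trivial representation, namely the Specht module $S^{(n+1-2r)}$ attached to a single row, this induction product is governed by the Pieri rule, the special case of the Littlewood--Richardson rule in which one of the two shapes is a single row. I would invoke the description of Littlewood--Richardson numbers from \cite{Sagan2001book} already referenced in Remark~\ref{rmk:skew_Specht}: the induced module equals the multiplicity-free sum $\bigoplus_\eta S^\eta$ over all $\eta\vdash (n+1)$ with $\nu\subseteq\eta$ such that the skew shape $\eta/\nu$ is a horizontal strip, i.e.\ contains at most one box in each column. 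Here the count $|\eta|=2r+(n+1-2r)=n+1$ is automatic, so the outer shapes are exactly the partitions of $n+1$.

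Finally I would interchange the two direct sums and collect the contributions according to the outer partition $\eta\vdash(n+1)$. For each fixed $\eta$, the multiplicity of $S^\eta$ becomes $\sum_\nu c_{Q,\nu}$, the sum ranging over the partitions $\nu$ of $2r$ with $\nu\subseteq\eta$ for which $\eta/\nu$ is a horizontal strip; this is precisely the statement of the corollary, the Pieri horizontal-strip condition being exactly the stated requirement that $\eta/\nu$ have at most one box in each column. I do not expect a genuine obstacle, as this is the \emph{standard argument} referred to in the statement; the only points demanding care are to confirm that the two versions of the combinatorial condition coincide and that the partition sizes line up correctly (the inner shapes $\nu$ having $2r$ boxes and the trivial tensor factor accounting for the remaining $n+1-2r$ boxes), so that no spurious off-by-one shift appears in the range of $\eta$.
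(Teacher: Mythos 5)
Your proposal is correct and follows essentially the same route as the paper's own proof: substitute the decomposition of $\widetilde{H}_{r-1}(\cB_{[2r]}^{odd})$ from Theorem~\ref{thm:skew_Specht} into the induced representation of Theorem~\ref{thm:induced}, pull the direct sum through the induction, apply the Pieri rule to each summand $\mathrm{Ind}(S^\nu\otimes S^{(n+1-2r)})$, and regroup by the outer shape $\eta$. Your explicit check that the trivial factor sits on $n+1-2r$ letters (so the outer shapes are partitions of $n+1$) is a sound precaution, and it matches the paper's horizontal-strip condition $\nu\leadsto\eta$ exactly.
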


\begin{proof}
By Theorem~\ref{thm:induced}, we have following $\Sym_{n+1}$ module isomorphisms.
\begin{align*} H_r(X^\R_{A_n})&\cong {\rm Ind}_{ \Sym_{\{1, \dots, 2r\}} \times \Sym_{\{2r+1, \dots, n+1\}}}^{\Sym_{n+1}}\left(\bigoplus_{\nu} c_{Q,\nu} S^\nu \otimes S^{(n-2r)}\right) \\
&\cong \bigoplus_{\nu}  c_{Q,\nu} \left( {\rm Ind}_{ \Sym_{\{1, \dots, 2r\}} \times \Sym_{\{2r+1, \dots, n+1\}}}^{\Sym_{n+1}}\left( S^\nu \otimes S^{(n-2r)}\right)\right)\\
&\cong \bigoplus_{\nu} c_{Q,\nu} \left(\bigoplus_{\nu\leadsto \eta} S^\eta\right)
 \cong \bigoplus_{\eta\vdash (n+1) } \left(\sum_{\nu\leadsto \eta} c_{Q,\nu}\right) S^\eta
\end{align*}
where $\nu\leadsto \eta$ means that we can obtain $\eta$ from $\nu$ by adding $n+1-2r$ boxes so that no two new boxes are in one column. In the third isomorphism, the well known Pieri rule (a special case of the Littlewood-Richardson rule) is used.
\end{proof}

\begin{ex}
If $n=5$ and $r=3$, we let $Q=\{1, 3, 5\}$. Then, the homology $H_3(X^\R_{A_5})$ is decomposed  into irreducible $\Sym_6$ modules in the following way due to Corollary~\ref{cor:typeA}.
 $$H_3(X^\R_{A_5})\cong S^{(3,3)}\oplus 2 S^{(3,2,1)}\oplus S^{(3,1,1,1)}\oplus S^{(2,2,2)}\oplus S^{(2,2,1,1)}\,.$$

For example, when $\eta=(2, 2, 2)\vdash 6$, the only $\nu$ we have to consider is $(2, 2, 2)$ and $c_{Q,\nu}=1$ hence $S^{(2,2,2)}$ is a summand in the decomposition of $H_3(X^\R_{A_5})$ with multiplicity $1$.
We list all standard tableaux of shape $\nu$ with descent set $Q$ for all possible $\nu\vdash 6$ .

$$  \tableau{
       1 & 3 &5\\
       2  & 4&6  }
 \quad
 \quad
 \tableau{
        1 & 3 &5\\
       2  & 4 \\
       6 }
 \quad
 \quad
  \tableau{
        1 & 3 &5\\
       2  & 6 \\
       4 }
\quad
\quad
       \tableau{
       1 & 3& 5 \\
       2   \\
       4 \\
       6 }
\quad
\quad
       \tableau{
       1 & 3 \\
       2  & 5  \\
       4 &6 }
\quad
\quad
       \tableau{
       1 & 3 \\
       2  & 5  \\
       4\\
       6 }
$$
It can be checked that $$\rm{dim}(S^{(3,3)})+ 2 \rm{dim}(S^{(3,2,1)})+\rm{dim}(S^{(3,1,1,1)})+\rm{dim} (S^{(2,2,2)})+\rm{dim}(S^{(2,2,1,1)})=5+2\times16+10+5+9=61\,,$$
which is the number of alternating permutations of length $6$ (times ${n+1}\choose {2r}$), and it is known to be the dimension of $H_3(X^\R_{A_5})$ (see \cite{CPP16,Henderson2012}).
\end{ex}

\begin{ex} When $n=5$ and $r=2$, we have the decomposition of $H_2(X^\R_{A_5})$ as in the following; $$H_2(X^\R_{A_5})\cong S^{(4, 2)}\oplus S^{(4, 1, 1)}\oplus 2 S^{(3,2,1)}\oplus S^{(3,1,1,1)}\oplus S^{(2,2,2)}\oplus S^{(2,2,1,1)} \,.$$
To compute the multiplicity of $S^{(3, 2, 1)}$, we consider partitions of $2r=4$, which is contained in $(3,2,1)$ and can be obtained from $(3, 2, 1)$ by deleting two boxes from different columns. There are three such partitions $(2, 1, 1), (2, 2)$ and $(3, 1)$, and the values $c_{Q, \nu}$ for each of them are $1, 1, 0$ respectively where $Q=\{1, 3\}$. Hence $S^{(3, 2, 1)}$ appears twice in the decomposition of $H_2(X^\R_{A_5})$.
By computing the dimension of each Specht module, we have
\begin{align*} \mathrm{dim}(H_2(X^\R_{A_5}))
=9+10+2\times16+10+5+9 =75\,.
\end{align*}
Note that $75=5\times 15$, where $5$ is the number of alternating permutations of length $4$, and $15$ is the index of the parabolic subgroup $\Sym_4\times\Sym_2$ in $\Sym_6$.
\end{ex}

\begin{rem} We, in Corollary~\ref{cor:typeA}, show how to write down the rational homology $H_r(X_{A_n}^\mathbb R)$  as a sum of irreducible $\Sym_{n+1}$-modules in an explicit way, while Henderson in his 2012 paper showed that the $\Sym_{n+1}$-module structure of the rational cohomology   $H^r(X_{A_n}^\R)$ is isomorphic to a signed sum of induced representations of the tensor product of a trivial representation and a sign representation of some parabolic subgroups of $\Sym_{n+1}$; see Corollary~1.2  in \cite{Henderson2012}.
\end{rem}

\subsection{Type $B_n$ representation}
When $R=B_n$, the Weyl group $W_{B_n}$ is the group of signed permutations (also called the hyperoctahedral group) and $K_{B_n}$ has a nice description as the dual of the type~$B_n$ permutohedron.
We write $[\pm n]=\{\pm1, \ldots, \pm n\}$, $-J=\{-j \mid j\in J\}$ for $J\subset [\pm n]$,
and $T_n=\{J\in 2^{[\pm n]}\mid J \cap -J=\emptyset, J\neq \emptyset\}$.
For each $J \in T_n$,
we write $J^{\pm} = (J\cup -J) \cap [n]$.

 The map $V_{B_n}\to T_n$ sending $w\omega_i\to \{w(1),w(2),\ldots,w(i)\}$
 induces a $W_{B_n}$-equivariant bijection.
 Under this identification, $J_1,\ldots,J_k\in T_n$ form a simplex if and only if
 they form a nested chain of subsets up to permutation.
 To represent $\Lambda_{B_n}$, we use the basis consisting of $\e_k:=(1,k)\omega_1$ for $1\le k\le n$, where $(1,k)\in W_{B_n}$ is the transposition.
 The coordinates of $J\in V_{B_n}$ with this basis are given by
 $\sum_{k\in J^{\pm}} \e_k$.
Note that, by Theorem~\ref{stable-action}, $H_\ast(X^\R_{B_n}) \cong \bigoplus_{S \in \row(\Lambda_{B_n})} \widetilde{H}_{\ast-1} (K_S)$ as $W_{B_n}$-modules.
We establish a correspondence between $\row(\Lambda_{B_n})$ and the power set of $[n]$ in the following way.
If $S\in \row(\Lambda_{B_n})$ is a sum of $r$ rows, say $i_1,\ldots,i_{r}$th rows,
then we associate $I_S =\{i_1,\ldots,i_{r}\}\subset [n]$ to it.

One can see that $K_S$ is the full-subcomplex of $K_{B_n}$ consisting of the vertices $J$ such that $|J^{\pm} \cap I_S|$ is odd.
By Lemmas~3.7 and 3.8 in \cite{CPP16}, all vertices $J$ such that $J^{\pm} \not\subset I_S$ are removable without changing the homotopy type of $K_S$.
Therefore, $K_S$ is homotopy equivalent to the poset complex associated to
the odd rank-selected lattice $\mathcal{C}_{I_S}^{odd}$ of faces of the cross-polytope over $I_S$.

We review some basic facts on the representations of hyperoctahedral groups. (See \cite{Geissinger1978}).
A \emph{double partition} $(\lambda, \mu)$ of $n$, denoted by $(\lambda,\mu)\vdash n$, is an ordered pair of partitions such that $|\lambda|+|\mu|=n$.

 Irreducible representations of the hyperoctahedral group $W_{B_n}$ are indexed by the double partitions of $n$ and we let $S^{(\lambda, \mu)}$ be an irreducible representation corresponding to the double partition $(\lambda, \mu)$. Then the dimension of $S^{(\lambda, \mu)}$ is given by the  number of \emph{double standard tableaux} of shape $(\lambda, \mu)$, that is the ordered pair of standard tableaux of shape $\lambda$ and $\mu$ respectively such that each number from 1 to $n$ appears exactly once in two tableaux. Hence $\dim(S^{(\lambda, \mu)})={n\choose k} f^\lambda f^\mu$, where $\lambda\vdash k$.
For a double standard tableau $(T_1, T_2)$, an entry $i$ is a \emph{descent} if

\begin{enumerate}
  \item $i$ and $i+1$ are in the same tableau and $i+1$ appears in a lower row than $i$, or
  \item $i$ is in $T_1$, $i+1$ is in $T_2$, or
  \item $i=n$ appears in $T_1$.
\end{enumerate}

Stanley computed the $W_{B_r}$-module $\widetilde{H}_{*} (\mathcal{C}_{I_S}^{odd})$:
\begin{thm}[{\cite{Stanley1982}}]\label{thm:stanley1982}
When $|I_S|=r$,
\[
\widetilde{H}_{*} (\mathcal{C}_{I_S}^{odd})\cong
\begin{cases} {\displaystyle \bigoplus_{(\lambda,\mu)\vdash r} b(\lambda,\mu) S^{(\lambda, \mu)}} & (*=\lfloor \frac{r-1}{2} \rfloor) \\
 0 & (*\neq \lfloor \frac{r-1}{2} \rfloor)\end{cases},
\]
where $b(\lambda,\mu)$ is the number of double standard Young tableaux of shape $(\lambda,\mu)$ whose descent set is
the set of odd numbers less than or equal to $r=|\lambda|+|\mu|$.
\end{thm}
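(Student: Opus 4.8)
The plan is to recognize this as a special case of Stanley's general theory of rank-selected homology representations applied to the face lattice of the cross-polytope, and then to carry out the combinatorial bookkeeping in terms of double standard Young tableaux, exactly parallel to the type-$A$ treatment leading to Theorem~\ref{thm:skew_Specht}.

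First I would record the geometric input. Let $P$ denote the face lattice $\mathcal{C}_{I_S}$ of the $r$-dimensional cross-polytope over $I_S$, augmented with a bottom element $\hat0$ and a top element $\hat1$. Its proper part consists of the faces of the (simplicial) boundary sphere, so the associated order complex is the barycentric subdivision of $S^{r-1}$; in particular $P$ is a Cohen--Macaulay (indeed Eulerian) poset of rank $r+1$, graded by vertex-cardinality. The group $W_{B_r}$ of signed permutations is precisely the combinatorial symmetry group of the cross-polytope, and it acts on $P$ preserving the rank function, so the machinery of group actions on rank-selected Cohen--Macaulay posets applies verbatim.

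Next I would invoke Stanley's rank-selection theorem. For the interior rank set $Q=\{\text{odd integers}\le r\}$ the selected subposet $\mathcal{C}_{I_S}^{odd}=P_Q$ is again Cohen--Macaulay of rank $|Q|+1$, so its order complex has reduced homology concentrated in the single top degree $|Q|-1=\lfloor (r-1)/2\rfloor$; this already accounts for the vanishing range in the statement. Moreover, as a virtual $W_{B_r}$-character the top homology is given by the inclusion--exclusion (Möbius) inversion
\[
\widetilde{H}_{|Q|-1}(P_Q)\;=\;\sum_{T\subseteq Q}(-1)^{|Q|-|T|}\,\alpha_T,
\]
where $\alpha_T$ is the permutation character of $W_{B_r}$ on the set of maximal chains of $P_T$. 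Since $P$ is Cohen--Macaulay this virtual character is an honest representation, and it remains only to decompose it into irreducibles $S^{(\lambda,\mu)}$.

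Finally I would perform the combinatorial identification. A maximal chain of $P_T$ is a nested flag of signed subsets of $I_S$ with prescribed cardinalities, whose $W_{B_r}$-stabilizer is a Young subgroup; hence each $\alpha_T$ is an induction of a trivial representation, the type-$B$ analogue of the permutation-module picture used for the Boolean algebra. Expanding each $\alpha_T$ into irreducibles and collecting the alternating sum, the multiplicity of $S^{(\lambda,\mu)}$ in $\widetilde{H}_{|Q|-1}(P_Q)$ becomes the number of double standard Young tableaux of shape $(\lambda,\mu)$ whose $B$-descent set (in the threefold sense defined just above the theorem) equals $Q$; specializing $Q$ to the odd integers $\le r$ yields the stated count $b(\lambda,\mu)$. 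The hard part is precisely this last step: one needs the type-$B$ descent theory, namely that the fundamental (Solomon descent algebra) grading of $W_{B_r}$ is governed by double-tableau descents, in order to match the inclusion--exclusion output with the tableau count. This is where the combinatorics of signed permutations genuinely departs from the symmetric-group case, and it is the substance of Stanley's computation.
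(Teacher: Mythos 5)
The paper offers no proof of this statement at all: it is imported verbatim from Stanley's 1982 paper, as the bracketed citation indicates, so there is no internal argument to compare against. Your reconstruction follows exactly the route of that cited source, and the framework is sound: the augmented face lattice $P$ of the $r$-dimensional cross-polytope is Cohen--Macaulay of rank $r+1$ (its proper part triangulates $S^{r-1}$), its lattice automorphism group is $W_{B_r}$, rank selection preserves Cohen--Macaulayness (Baclawski/Stanley), which yields the concentration of $\widetilde{H}_*$ in the single degree $|Q|-1=\lfloor (r-1)/2\rfloor$ for $Q=\{\text{odd } i\le r\}$, and Stanley's equivariant rank-selection formula identifies the top homology character with $\sum_{T\subseteq Q}(-1)^{|Q|-|T|}\alpha_T$, where each $\alpha_T$ is induced-trivial from a flag stabilizer of the form $\mathfrak{S}_{t_1}\times\mathfrak{S}_{t_2-t_1}\times\cdots\times W_{B_{r-t_k}}$. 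All of these assertions are correct, including the degree bookkeeping.

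The one point where your write-up stops short is the step you yourself flag as ``the hard part'': converting the alternating sum of these induced-trivial characters into the count $b(\lambda,\mu)$ of double standard tableaux with descent set equal to $Q$. As written this is an assertion rather than a proof. To close it one needs the type-$B$ analogue of Young's rule: the multiplicity of $S^{(\lambda,\mu)}$ in $\mathrm{Ind}^{W_{B_r}}_{\mathfrak{S}_{t_1}\times\cdots\times W_{B_{r-t_k}}}(1)$ equals the number of double standard tableaux of shape $(\lambda,\mu)$ whose descent set (in the threefold sense of the paper) is \emph{contained in} $T$; inclusion--exclusion over $T\subseteq Q$ then converts containment into equality. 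That lemma can be extracted from the branching/Littlewood--Richardson theory for $W_{B_r}$ or from signed $P$-partition theory, and it is precisely the substance of Stanley's computation. So your proposal is a faithful roadmap of the cited proof --- indeed more informative than the paper, which only cites --- but with that single combinatorial lemma left unproven it is an outline rather than a complete argument.
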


Similarly to Theorem \ref{thm:induced}, we can prove the following theorem:
\begin{thm}\label{thm:induced_typeB}
The $k$th homology $H_k(X^\R_{B_n})$ of $X^\R_{B_n}$ with the natural action of $W_{B_n}$ is isomorphic to the sum of two induced representations
\[\bigoplus_{r\in \{2k-1, 2k\}}\left({\rm Ind}_{ W_{B_r} \times W_{B_{n-r}}}^{W_{B_n}}\left(\bigoplus_{(\lambda,\mu)\vdash r} b(\lambda,\mu)S^{(\lambda,\mu)}\otimes S^{(\emptyset; (n-r))}\right)\right)\] of $W_{B_n}$,
where $S^{(\emptyset; (n-r))}$ is the trivial representation of $W_{B_{n-r}}$ and the embedding  $W_{B_r} \times W_{B_{n-r}}\subseteq W_{B_n}$ comes from the diagonal embedding $GL_{r}(\mathbb R)\times GL_{n-r}(\mathbb R)\subseteq GL_{n}(\mathbb R)$.
\end{thm}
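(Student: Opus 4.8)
The plan is to follow the proof of Theorem~\ref{thm:induced} for type~$A$ almost verbatim, the only genuinely new feature being that the homological degree is now controlled by a floor function, which is precisely what produces a \emph{sum} of two induced representations rather than a single one. The starting point is the $W_{B_n}$-module isomorphism $H_k(X^\R_{B_n}) \cong \bigoplus_{S \in \row(\Lambda_{B_n})} \widetilde{H}_{k-1}(K_S)$ furnished by Theorem~\ref{stable-action}, together with the homotopy equivalence $K_S \simeq \mathcal{C}_{I_S}^{odd}$ established above via \cite{CPP16}. By Theorem~\ref{thm:stanley1982} the reduced homology of $\mathcal{C}_{I_S}^{odd}$ is concentrated in degree $\lfloor (|I_S|-1)/2\rfloor$, so the summand $\widetilde{H}_{k-1}(K_S)$ vanishes unless $\lfloor (|I_S|-1)/2\rfloor = k-1$, that is, unless $r := |I_S| \in \{2k-1,2k\}$. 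I would therefore regroup the direct sum first by the value of $r$ and then by $I_S$, obtaining $H_k(X^\R_{B_n}) \cong \bigoplus_{r \in \{2k-1,2k\}} \bigoplus_{I \in \binom{[n]}{r}} \widetilde{H}_{k-1}(\mathcal{C}_I^{odd})$.

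Second, I would show that each inner sum over $I \in \binom{[n]}{r}$ is an induced representation, which is where the combinatorics of the $W_{B_n}$-action enters. The key observation is that the bijection $S \mapsto I_S$ between $\row(\Lambda_{B_n})$ and $2^{[n]}$ intertwines the $W_{B_n}$-action on homology with the permutation action of $W_{B_n}$ on $2^{[n]}$ that factors through the sign-forgetting quotient $W_{B_n} \to \Sym_n$. Indeed, since a signed permutation $w$ commutes with negation, one has $(wJ)^{\pm} = \bar w(J^{\pm})$ for the underlying unsigned permutation $\bar w \in \Sym_n$; hence $w$ carries the full subcomplex $K_S$ on $\{J : |J^{\pm} \cap I_S| \text{ odd}\}$ to $K_{w\cdot S}$ with $I_{w\cdot S} = \bar w(I_S)$. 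Consequently $W_{B_n}$ permutes the family $\{K_I : I \in \binom{[n]}{r}\}$ transitively, and the stabilizer of $I_0 = \{1,\dots,r\}$ is exactly the set of signed permutations whose underlying permutation preserves $\{1,\dots,r\}$, namely $W_{B_r} \times W_{B_{n-r}}$, which has index $\binom{n}{r}$ in $W_{B_n}$.

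Third, I would identify the representation carried by the distinguished summand. The complex $\mathcal{C}_{I_0}^{odd}$ is the odd rank-selected face lattice of the cross-polytope on $\{1,\dots,r\}$, whose symmetry group is exactly $W_{B_r}$; by Theorem~\ref{thm:stanley1982} the $W_{B_r}$-module $\widetilde{H}_{k-1}(\mathcal{C}_{I_0}^{odd})$ is $\bigoplus_{(\lambda,\mu)\vdash r} b(\lambda,\mu)S^{(\lambda,\mu)}$, while $W_{B_{n-r}}$, which fixes $I_0$ pointwise, acts trivially, i.e. as $S^{(\emptyset;(n-r))}$. By the standard description of an induced module as a transitive direct sum of summands permuted by $G$ with a prescribed stabilizer action, the inner sum equals ${\rm Ind}_{W_{B_r}\times W_{B_{n-r}}}^{W_{B_n}}\bigl(\bigoplus_{(\lambda,\mu)\vdash r} b(\lambda,\mu) S^{(\lambda,\mu)} \otimes S^{(\emptyset;(n-r))}\bigr)$; summing over $r \in \{2k-1,2k\}$ yields the theorem, the embedding being the block-diagonal (Levi) one as stated.

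The step I expect to require the most care is not the representation-theoretic bookkeeping but the equivariance of the homotopy equivalence $K_S \simeq \mathcal{C}_{I_S}^{odd}$. The reduction via \cite{CPP16} removes the vertices $J$ with $J^{\pm} \not\subset I_S$ one at a time, and I must ensure that this deletion procedure can be organized so as to commute with the action of $\mathrm{Stab}(S) = W_{B_r}\times W_{B_{n-r}}$, so that the resulting isomorphism on homology is genuinely one of stabilizer-modules and not merely of vector spaces. This is the precise type~$B$ counterpart of the use of \cite[Lemma~5.3]{Choi-Park2015} in the type~$A$ argument; once it is in place, the transitivity from the second step upgrades the stabilizer-equivariance to full $W_{B_n}$-equivariance and the induced-representation identification goes through.
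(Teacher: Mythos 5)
Your proposal is correct and follows essentially the same route as the paper, whose proof of this theorem consists of the remark ``similarly to Theorem~\ref{thm:induced}'': namely, combine the decomposition from Theorem~\ref{stable-action} with the homotopy equivalence $K_S \simeq \mathcal{C}_{I_S}^{odd}$, use the degree concentration in Theorem~\ref{thm:stanley1982} to restrict to $r=|I_S|\in\{2k-1,2k\}$, and identify each transitive block of summands, via the orbit--stabilizer correspondence with stabilizer $W_{B_r}\times W_{B_{n-r}}$, as the stated induced representation. Your write-up in fact supplies details the paper leaves implicit, notably the verification that the $W_{B_n}$-action on $\row(\Lambda_{B_n})\cong 2^{[n]}$ factors through the quotient $W_{B_n}\to\Sym_n$ and the need for the vertex-removal equivalence of \cite{CPP16} to be compatible with the stabilizer action.
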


The irreducible decomposition is given by a standard argument:

\begin{cor}\label{cor:typeB}
We have
\[
H_{k}(X^\R_{B_n})\cong \bigoplus_{(\lambda,\nu)\vdash n} \left(\sum_{r\in \{2k-1, 2k\}} \sum_{
\mu} b(\lambda, \mu)\right) S^{(\lambda, \nu)}\,,
\]
where $\mu$ in the inside summation, runs over all partitions that are contained in $\nu$, and $\nu/\mu$ has $(n-r)$ boxes with at most one box in each column.
\end{cor}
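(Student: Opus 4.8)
The plan is to follow the same route as in the proof of Corollary~\ref{cor:typeA}: starting from the induced-representation description of Theorem~\ref{thm:induced_typeB}, I would decompose each induced summand into irreducibles, the only new ingredient being the branching rule for the hyperoctahedral groups in place of the Pieri rule for $\Sym_{n+1}$. Concretely, I would first invoke the Frobenius-type characteristic map for the tower $\{W_{B_m}\}_m$ (see \cite{Geissinger1978}), under which an irreducible $S^{(\lambda,\mu)}$ is sent to the pair of Schur functions $(s_\lambda, s_\mu)$ and the induction product
\[
S^{(\lambda_1,\mu_1)}\otimes S^{(\lambda_2,\mu_2)} \longmapsto {\rm Ind}_{W_{B_{m_1}}\times W_{B_{m_2}}}^{W_{B_{m_1+m_2}}}\bigl(S^{(\lambda_1,\mu_1)}\otimes S^{(\lambda_2,\mu_2)}\bigr)
\]
corresponds to componentwise multiplication $(s_{\lambda_1}s_{\lambda_2},\, s_{\mu_1}s_{\mu_2})$. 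Thus the decomposition is governed by a pair of Littlewood--Richardson rules, one on each component.

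Second, I would specialise to the summands appearing in Theorem~\ref{thm:induced_typeB}. Because $S^{(\emptyset;(n-r))}$ is the trivial representation of $W_{B_{n-r}}$, its characteristic is the pair $(1, h_{n-r})$; multiplication by $1$ on the first component leaves $\lambda$ unchanged, while the classical Pieri rule $s_\mu h_{n-r}=\sum_\nu s_\nu$ on the second component restricts $\nu$ to those partitions for which $\nu/\mu$ is a horizontal strip of size $n-r$. Hence
\[
{\rm Ind}_{W_{B_r}\times W_{B_{n-r}}}^{W_{B_n}}\bigl(S^{(\lambda,\mu)}\otimes S^{(\emptyset;(n-r))}\bigr)\cong \bigoplus_{\nu} S^{(\lambda,\nu)},
\]
the sum being over $\nu\supseteq\mu$ with $|\nu/\mu|=n-r$ and at most one box of $\nu/\mu$ in each column.

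Third, I would substitute this into Theorem~\ref{thm:induced_typeB}, sum over $(\lambda,\mu)\vdash r$ weighted by $b(\lambda,\mu)$ and over $r\in\{2k-1,2k\}$, and then read off the coefficient of a fixed irreducible $S^{(\lambda,\nu)}$. Reindexing the inner sum by the varying $\mu$ (with $\lambda$ and $\nu$ now fixed) produces exactly the multiplicity $\sum_{r\in\{2k-1,2k\}}\sum_\mu b(\lambda,\mu)$, where $\mu\subseteq\nu$ and $\nu/\mu$ is a horizontal strip of $n-r$ boxes, matching the statement.

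The step I would be most careful about---and the only one beyond routine bookkeeping---is fixing the conventions so that the trivial module $S^{(\emptyset;(n-r))}$ corresponds to $(1,h_{n-r})$ rather than $(h_{n-r},1)$ under the chosen characteristic map; this is what guarantees that the horizontal strip is attached to the second component $\nu$ and not to the first component $\lambda$. Once this is reconciled with the labelling of irreducibles used in Theorem~\ref{thm:stanley1982} and Theorem~\ref{thm:induced_typeB}, the argument is the direct hyperoctahedral analogue of Corollary~\ref{cor:typeA}.
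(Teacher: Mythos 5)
Your proposal is correct and follows essentially the same route as the paper: both start from Theorem~\ref{thm:induced_typeB} and decompose each induced summand via the componentwise (type $B$) Littlewood--Richardson rule, which degenerates to the classical Pieri rule on the second component because the second tensor factor is the trivial representation $S^{(\emptyset;(n-r))}$, yielding the multiplicity $\sum_{r\in\{2k-1,2k\}}\sum_{\mu} b(\lambda,\mu)$ after reindexing. The only cosmetic difference is that you justify the componentwise rule through the Geissinger--Kinch characteristic map, whereas the paper cites it directly as the hyperoctahedral Littlewood--Richardson rule from Chapter 6 of \cite{GP2000book}, writing the coefficient as $c_{\lambda,\emptyset}^{\nu_1}c_{\mu,(n-r)}^{\nu_2}$ and then simplifying.
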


\begin{proof} We compute a summand of the decomposition given in Theorem~\ref{thm:induced_typeB}  using (type B) Littlewood-Richardson rule. Then the result is a direct consequence.

We have the following $W_{B_n}$-module isomorphisms:
\begin{align*}
&{\rm Ind}_{ W_{B_r} \times W_{B_{n-r}}}^{W_{B_n}}\left(\bigoplus_{(\lambda,\mu)\vdash r} b(\lambda,\mu)S^{(\lambda,\mu)}\otimes S^{(\emptyset; (n-r))}\right)\\
&\cong  \bigoplus_{(\lambda,\mu)\vdash r} b(\lambda, \mu)\,\, \left({\rm Ind}_{ W_{B_r} \times W_{B_{n-r}}}^{W_{B_n}} (S^{(\lambda,\mu)}\otimes S^{(\emptyset; (n-r))})\right)\\
&\cong \bigoplus_{(\lambda,\mu)\vdash r} b(\lambda, \mu) \left(\bigoplus_{(\nu_1, \nu_2)} c_{\lambda, \emptyset}^{\nu_1} c_{\mu, (n-r)}^{\nu_2} S^{(\nu_1, \nu_2)}\right)\\
&\cong \bigoplus_{(\lambda,\mu)\vdash r} b(\lambda, \mu)\left(\bigoplus_{\mu\leadsto  \nu} S^{(\lambda, \nu)}\right)
\cong \bigoplus_{(\lambda,\nu)\vdash n} \left(\sum_{\mu\leadsto \nu} b(\lambda, \mu)\right) S^{(\lambda, \nu)}\,.
\end{align*}
where $\mu\leadsto \nu$ means that we can obtain $\nu$ from $\mu$ by adding $n-r$ boxes so that no two new boxes are in one column. In the second isomorphism, we used the Littlewood-Richardson rule for hyperoctahedral group representations (see Chapter 6 of \cite{GP2000book}), where $c_{\alpha, \beta}^{\gamma}$ is the well known Littlewood-Richardson number.
\end{proof}

\begin{ex} We let $n=3$ and $k=2$, then since $2k=4>n$ we need take care of the case when $r=3$ only. Among 10 possible double partitions $(\lambda, \mu)\vdash 3$, only four of them have nonzero $b(\lambda, \mu)$; $b((1), (1, 1))=b((2), (1))=b((1, 1), (1))=b((2, 1), \emptyset)=1$. For example, $ \left(\, \,\,\tableau{ 1\\3}\,\,\,,\,\, \tableau 2 \,\,\right)$ is the double standard tableau of shape $((1, 1), (1))$ with descent set $\{1, 3\}$. We hence have the following decomposition of $ H_{2}(X^\R_{B_3})$ into irreducible $W_{B_3}$ modules;
\[H_{2}(X^\R_{B_3})\cong S^{((1), (1, 1))}\oplus S^{((2), (1))}\oplus S^{((1,1), (1))} \oplus S^{((2, 1), \emptyset)}\,.\]
Since ${\dim}(S^{((1), (1, 1))})={\dim}(S^{((2), (1))})={\dim}(S^{((1,1), (1))})=3$ and ${\dim}(S^{((2, 1), \emptyset)})=2$, ${\dim}(H_{2}(X^\R_{B_3}))=11$, which is the number of alternating signed permutations in $W_{B_3}$ (see \cite{CPP16}).
\end{ex}

\begin{ex} Due to Corollary~\ref{cor:typeB}, we have the following decomposition;
\[H_{1}(X^\R_{B_3})\cong S^{(\emptyset, (2, 1))} \oplus S^{(\emptyset, (1,1, 1))}\oplus 2 S^{((1), (2))} \oplus S^{((1), (1, 1))}\,.\]
Since ${\dim}(S^{(\emptyset, (2, 1))})=2$, ${\dim}(S^{(\emptyset, (1, 1, 1))})=1$, ${\dim}(S^{((1), (2))})={\dim}(S^{((1), (1, 1))})=3$, we have  \[{\dim}(H_{1}(X^\R_{B_3}))=12={3\choose 2}b_2+{3\choose 1}b_1,\] where $b_i$ is the number of alternating signed permutations in $W_{B_i}$, as it was shown in \cite{CPP16}.
\end{ex}

\section{Real toric varieties associated to nestohedra}\label{sec:nestohedra}
In this section, we deal with real toric varieties corresponding to nestohedra as a generalization of $X^\R_{A_n}$ introduced in Section~\ref{sec:real_toric_var_corr_to_Weyl}.
Then, we realize some Foulkes representations as the top homology of those varieties.

A \emph{building set} $\fB$ on a finite set $S$ is a collection of nonempty subsets of $S$ such that
    \begin{enumerate}
        \item $\fB$ contains all singletons $\{i\}$, $i\in S$,
        \item if $I, J \in \fB$ and $I\cap J\neq\varnothing$, then $I\cup J\in\fB$.
    \end{enumerate}
Let $\fB$ be a building set on $[n+1]=\{1,\ldots,n+1\}$.
If $[n+1] \in \fB$, then $\fB$ is said to be \emph{connected}.

\begin{dfn}
    For a connected building set $\fB$ on $[n+1]$, a subset $N\subset \fB\setminus\{[n+1]\}$ is called a \emph{nested set}
    if both of the following conditions hold:
    \begin{enumerate}[{(N}1)]
        \item For any $I, J\in N$, one has $I\subset J$, $J \subset I$, or $I\cap J=\emptyset$.\label{n1}
        \item For any collection of $k\geq 2$ disjoint subsets $J_1, \ldots, J_k \in N$, their union $J_1\cup \cdots \cup J_k$ is not in $\fB$.\label{n2}
    \end{enumerate}
    Let $V_\fB=\fB \setminus \{[n+1]\}$.
    A simplicial complex $K_\fB$ over $V_\fB$ called
    the \emph{nested set complex}  is defined to be the set of all nested sets for $\fB$.
\end{dfn}

It should be noted that $K_\fB$ is realizable as the boundary complex of a Delzant polytope $P_\fB$. Refer to \cite[Section~3]{Choi-Park2015} for details.
The normal vectors of facets of $P_\fB$ define a characteristic matrix $\Lambda_\fB$ over $K_\fB$: the column of $\Lambda_\fB$ indexed by $I\in \fB \setminus \{[n+1]\}$ is $\sum_{k \in I} \e_k$, where $\e_{n+1}=\sum_{i=1}^n \e_i$.
Then we obtain the real toric variety $X^\R_\fB:= M^\R(K_\fB,{\Lambda_\fB})$ associated to a connected building set $\fB$.
Notice that $V_\fB$ is a subset of $V_{A_n}$, and the corresponding column of $\Lambda_\fB$ is equal to that of $\Lambda_{A_n}$.

We consider the automorphism group $\Aut(\fB)$ of $\fB$ consisting of all elements of $\Sym_{n+1}$ which preserve $\fB$. Then, $\Aut(\fB) \subset \Sym_{n+1}$ acts on $\fB$ as well as $K_\fB$.

\begin{lem} \label{lem:building_set}
    Let $\fB$ be a connected building set.
    Then, $\Aut(\fB)$ acts on $X^\R_\fB$ and we have an $\Aut(\fB)$-module isomorphism
    \[
     H_\ast(X^\R_\fB) \cong \bigoplus_{S \in \row(\Lambda_\fB)}\widetilde{H}_{\ast-1}\left((K_\fB)_S \right).
    \]
\end{lem}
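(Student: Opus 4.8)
The plan is to deduce this directly from Theorem~\ref{stable-action}, applied with $K=K_\fB$, $\Lambda=\Lambda_\fB$, and $G=\Aut(\fB)\subset \Aut(K_\fB)$. Once the hypothesis of that theorem is verified, both the existence of the induced $\Aut(\fB)$-action on $X^\R_\fB=M^\R(K_\fB,\Lambda_\fB)$ and the claimed isomorphism (with $K_S=(K_\fB)_S$) follow at once from \eqref{isom}. Thus the only thing requiring proof is that $\Aut(\fB)$ preserves $\ker\Lambda_\fB$, and I would verify the equivalent condition~(2): for each $g\in \Aut(\fB)$ I will exhibit an invertible $n\times n$-matrix $A_g$ with $\Lambda_\fB P_g^{-1}=A_g\Lambda_\fB$.

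First I would recall the linear action underlying the column data. On $\Z_2^n=\langle \e_1,\ldots,\e_n\rangle$, with the convention $\e_{n+1}=\sum_{i=1}^n \e_i$ so that $\sum_{k=1}^{n+1}\e_k=0$, the symmetric group $\Sym_{n+1}$ acts linearly by $\rho(\sigma)\e_k=\e_{\sigma(k)}$ for all $k\in[n+1]$; this is well defined because the single relation $\sum_k\e_k=0$ is permutation-invariant. Since the columns of $\Lambda_\fB$ coincide with those of $\Lambda_{A_n}$, the column of $\Lambda_\fB$ indexed by $I\in V_\fB=\fB\setminus\{[n+1]\}$ is the coordinate vector $v_I=\sum_{k\in I}\e_k$, and by construction $\rho(\sigma)v_I=v_{\sigma(I)}$; this equivariance is the type~$A$ instance of Lemma~\ref{lem:Weyl_group_acts_well}.

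Now take $g\in\Aut(\fB)$. Since $g$ preserves $\fB$ and fixes the full set $[n+1]\in\fB$, it permutes $V_\fB$, so the permutation matrix $P_g$ acting on $\Z_2^{V_\fB}$ (the column index set) is well defined; this is precisely where membership in $\Aut(\fB)$, rather than in $\Sym_{n+1}$ at large, is used. Setting $A_g:=\rho(g^{-1})$, I would compare the two sides column by column: the $I$-th column of $\Lambda_\fB P_g^{-1}$ is the $g^{-1}(I)$-th column $v_{g^{-1}(I)}$ of $\Lambda_\fB$, while the $I$-th column of $A_g\Lambda_\fB$ is $\rho(g^{-1})v_I=v_{g^{-1}(I)}$. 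Hence $\Lambda_\fB P_g^{-1}=A_g\Lambda_\fB$, which establishes condition~(2) and completes the application of Theorem~\ref{stable-action}.

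The argument is essentially bookkeeping, so I do not expect a serious obstacle; the one point demanding care is matching conventions between the left multiplication $A_g=\rho(g^{-1})$ (the linear action on the $\e_k$-coordinates, i.e.\ on the rows) and the right multiplication $P_g$ (the permutation of columns induced by $g$ on $V_\fB$), and confirming that $g\in\Aut(\fB)$ genuinely maps the column index set $V_\fB$ into itself so that $P_g$ makes sense. Connectedness of $\fB$ enters only to guarantee $[n+1]\in\fB$, ensuring that $V_\fB=\fB\setminus\{[n+1]\}$ is $\Aut(\fB)$-stable.
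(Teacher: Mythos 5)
Your proposal is correct and takes essentially the same route as the paper: both reduce to condition~(2) of Theorem~\ref{stable-action} by invoking the type-$A$ equivariance of Lemma~\ref{lem:Weyl_group_acts_well} and the observation that $\Lambda_\fB$ consists of a subset of the columns of $\Lambda_{A_n}$. Your explicit column-by-column verification with $A_g=\rho(g^{-1})$ merely spells out what the paper's two-line argument leaves implicit.
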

\begin{proof}
  For each $g \in \Aut(\fB) \subset \Sym_{n+1}$, there exists an $n \times n$-matrix $A_g$ such that $g \cdot \Lambda_{A_n} = A_g \Lambda_{A_n}$
  by Lemma~\ref{lem:Weyl_group_acts_well}. Since $\Lambda_\fB$ is obtainable from $\Lambda_{A_n}$ by removing columns, we have $g \cdot \Lambda_\fB = A_g \Lambda_{\fB}$.
   Therefore, the lemma follows from Theorem~\ref{stable-action}.
\end{proof}

Now, we consider a particular class of connected building sets on $[n+1]$ whose automorphism group is $\Sym_{n+1}$.
As in Section~\ref{subsec:Type_A}, we denote by $\cB_I$ the Boolean algebra over $I\subset [n+1]$,
 and by $\cB_I^Q$ the rank selected Boolean algebra over $I$ with respect to $Q \subset [n+1]$.

For $k \geq 1$, we set $Q_k = \{ 1, k+1, \ldots, n+1 \}$.
Then, $\fB_{n,k}=\cB^{Q_k}_{[n+1]}$ is a connected building set and $\Aut(\fB_{n,k}) = \Sym_{n+1}$.
We remark that this is a generalisation of $X^\R_{A_n}$ as $\fB_{n,1}$ for $k=1$ is the power set of $[n+1]$
and $X^\R_{\fB_{n,1}}$ is exactly equal to $X^\R_{A_n}$.

Similarly to the case $X^\R_{A_n}$, we identify an element $S \in \row(\Lambda_{\fB_{n,k}})$ with the even cardinality subsets $I_S$ of $[n+1]$,
then Lemma \ref{lem:building_set} specializes to giving an $\Sym_{n+1}$-module isomorphism $H_\ast(X^\R_{\fB_{n,k}}) \cong \bigoplus_{I_S\in {[n+1]\choose 2r}} \widetilde{H}_{\ast-1} (K_S)$.

\begin{lem} \label{lem:K_[n+1]}
  Let $n$ be an odd integer, and $S \in \row(\Lambda_{\fB_{n,k}})$ the sum of all row vectors of $\row(\Lambda_{\fB_{n,k}})$, that is, $I_S = [n+1]$.
  Then, $K_S$ is $\Sym_{n+1}$-equivariantly homeomorphic to $\cB^{Q}_{[n+1]}$, where
  $Q = \{1, 2, \ldots,k\} \cup \{1, 3, 5, \ldots, n \}$.
\end{lem}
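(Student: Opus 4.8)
The plan is to pin down the simplicial structure of $K_S$ directly from the nested set axioms and then to realize $\cB^Q_{[n+1]}$ as an explicit, $\Sym_{n+1}$-natural subdivision of $K_S$. Since a subdivision induces a PL homeomorphism of geometric realizations, and the subdivision here will be built from canonical (barycentric) data, the resulting homeomorphism will automatically be $\Sym_{n+1}$-equivariant.

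First I would describe $K_S$. As $I_S=[n+1]$, the defining condition ``$|J\cap I_S|$ odd'' becomes ``$|J|$ odd'', so $K_S$ is the full subcomplex of $K_{\fB_{n,k}}$ on those $J\in\fB_{n,k}\setminus\{[n+1]\}$ of odd cardinality: the singletons together with the \emph{large} sets, i.e. those of odd cardinality in $\{k+1,\dots,n\}$. Because a subset of $[n+1]$ lies in $\fB_{n,k}$ exactly when it is a singleton or has cardinality at least $k+1$, condition (N2) amounts to the statement that every pairwise disjoint subcollection of a nested set has total cardinality at most $k$. I would extract three consequences: two large sets are never disjoint, so the large members of a simplex form a chain $L_1\subsetneq\cdots\subsetneq L_p$; a large set together with a disjoint singleton already violates (N2), so every singleton occurring in the simplex is contained in $L_1$; and the singletons, being pairwise disjoint, number at most $k$. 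Conversely any such configuration is readily checked to be a nested set. Hence every simplex of $K_S$ is a join $\sigma*\tau$, where $\sigma$ is a simplex on at most $k$ of the singletons inside $L_1$ and $\tau$ is the simplex on the chain $L_1\subsetneq\cdots\subsetneq L_p$.

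Next I would compare with the order complex $\cB^Q_{[n+1]}$, where $Q=\{1,\dots,k\}\cup\{1,3,\dots,n\}$. A chain here splits into a low part of ranks in $\{1,\dots,k\}$ and a high part of odd ranks exceeding $k$; the high part is precisely a chain of large sets, while the low part, being a chain of rank-at-most-$k$ subsets of the smallest large set, is a simplex of the barycentric subdivision of the singleton simplex $\sigma$. This motivates the map $f\colon \cB^Q_{[n+1]}\to |K_S|$ that sends a large odd subset to the corresponding vertex of $K_S$ and a subset $A$ with $|A|\le k$ to the barycenter of the singleton simplex $\{\{a\}:a\in A\}$, extended affinely over each chain. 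I would verify that $f$ maps each chain of $\cB^Q_{[n+1]}$ into a single join $\sigma*\tau$ of $K_S$, with the low part landing in the barycentric subdivision of $\sigma$ and the high part spanning $\tau$; thus $f$ exhibits $\cB^Q_{[n+1]}$ as the subdivision of $K_S$ obtained by barycentrically subdividing the singleton factor of every join while leaving the large-chain factor intact. Being barycentric, this subdivision is compatible on shared faces, so $f$ is a PL homeomorphism, and its recipe (singletons to singletons, subsets to barycenters) commutes with the $\Sym_{n+1}$-action on subsets, giving the equivariance.

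I expect the main obstacle to be the first step: rigorously reading off the simplicial structure of $K_S$ from (N1) and (N2), especially the claims that every singleton of a simplex lies inside the smallest large set and that at most $k$ singletons can appear. Once $K_S$ is identified as the union of joins $\sigma*\tau$, recognizing $\cB^Q_{[n+1]}$ as the corresponding barycentric subdivision and checking that $f$ is a bijective subdivision are routine, and the equivariance is immediate from naturality. The case $k=2,\ n=3$, where both complexes are $2$-spheres ($K_S$ with $12$ triangles and $\cB^Q_{[4]}$ its barycentric refinement with $24$), serves as a useful consistency check.
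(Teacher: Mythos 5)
Your proposal is correct and takes essentially the same route as the paper: the paper likewise identifies $K_S$ as the full subcomplex of $K_{\fB_{n,k}}$ on the odd-cardinality vertices, notes that any $k$ singletons span a $(k-1)$-simplex, and obtains $\cB^{Q}_{[n+1]}$ by barycentrically subdividing these singleton simplices while leaving the chains of large sets intact, with equivariance being immediate from naturality. Your write-up simply supplies the details (the join structure $\sigma * \tau$ of nested sets forced by (N1)--(N2) and the explicit subdivision map $f$) that the paper's terse proof leaves implicit.
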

\begin{proof}
    We observe that $V_{\fB_{n,k}}= \{J \subset [n+1] \mid |J|=1 \text{ or } k < |J| \le n \}$, and $K_S$ is the full-subcomplex of $K_{\fB_{n,k}}$ on the vertices $J \in V_{\fB_{n,k}}$ such that $|J|$ is odd.
    It should be noted that any set of $k$ singletons in $K_S$ form a ($k-1$)-simplex in $K_S$.
    Therefore, we can obtain $\cB^{Q}_{[n+1]}$ by taking the barycentric subdivision of such ($k-1$)-simplices, and hence, $K'_S$ is homeomorphic to $\cB^{Q}_{I_S}$.
    It is clear that it is compatible with the $\Sym_{n+1}$-action.
\end{proof}


By Theorem~\ref{thm:skew_Specht} and Remark~\ref{rmk:skew_Specht}, we obtain
\begin{thm}
  Let $n$ be an odd integer.
  The Foulkes representation appears on the top homology of $X^\R_{\fB_{n,k}}$.
  More precisely,
  we have an $\Sym_{n+1}$-module isomorphism
  \[
  H_\ast(X^\R_{\fB_{n,k}})\cong \begin{cases}
  0 & (*>d) \\
  S^{\lambda_Q} & (*=d),
  \end{cases}
  \]
 where  $d=\frac{n+1}{2} + \lfloor \frac{k}{2} \rfloor$,
$Q = \{1, 2, \ldots,k\} \cup \{1, 3, 5, \ldots, n \}$, $\lambda_Q$ is the skew hook with $n+1$ cells whose descent set is $Q$, and $S^{\lambda_Q}$ is the skew Specht module.
\end{thm}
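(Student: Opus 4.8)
The plan is to extract the statement from the $\Sym_{n+1}$-equivariant decomposition
\[
H_\ast(X^\R_{\fB_{n,k}}) \cong \bigoplus_{I_S} \widetilde{H}_{\ast-1}(K_S),
\]
provided by Lemma~\ref{lem:building_set} and its specialization, where $I_S$ ranges over the even-cardinality subsets of $[n+1]$, and then to pin down which summand survives in the top degree. Since $\Aut(\fB_{n,k})=\Sym_{n+1}$ permutes the index sets $I_S$, the distinguished index $I_S=[n+1]$ (which occurs precisely because $n$ is odd, so that $n+1$ is even) is fixed by the whole group, and its summand is therefore an $\Sym_{n+1}$-submodule on its own. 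By Lemma~\ref{lem:K_[n+1]} this summand is $\Sym_{n+1}$-equivariantly $\widetilde{H}_{\ast-1}(\cB^Q_{[n+1]})$ with $Q=\{1,\dots,k\}\cup\{1,3,\dots,n\}$. A direct count gives $|Q| = k + \tfrac{n+1}{2} - \lceil k/2\rceil = \tfrac{n+1}{2}+\lfloor k/2\rfloor = d$, so by Theorem~\ref{thm:skew_Specht} this homology is concentrated in degree $|Q|-1=d-1$, i.e.\ it contributes exactly at $\ast=d$, and there it equals $\bigoplus_\nu c_{Q,\nu}S^\nu=S^{\lambda_Q}$, the \emph{skew Specht module} attached to the skew hook with descent set $Q$ by Remark~\ref{rmk:skew_Specht}.

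It then remains to show that no other summand reaches degree $d$ and that nothing at all survives above $d$. For this I would generalize Lemma~\ref{lem:K_[n+1]} from the full set to an arbitrary even index set: for $I_S$ of size $2r$, the complex $K_S$ is the full subcomplex of $K_{\fB_{n,k}}$ on the vertices $J$ with $|J\cap I_S|$ odd, and I claim it is $\Sym_{I_S}$-equivariantly homotopy equivalent to the rank-selected Boolean algebra $\cB^{Q_r}_{I_S}$, where $Q_r = \bigl(\{1,\dots,k\}\cup\{1,3,\dots,2r-1\}\bigr)\cap[2r-1]$. The argument would follow that of Lemma~\ref{lem:K_[n+1]}: first remove every vertex $J$ with $J\not\subset I_S$ without changing the homotopy type (as in \cite[Lemma~5.3]{Choi-Park2015} and the reductions of \cite{CPP16}), then barycentrically subdivide the simplices spanned by clusters of at most $k$ singletons to recover the rank-selected order complex. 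By Theorem~\ref{thm:skew_Specht} the reduced homology of $\cB^{Q_r}_{I_S}$ is concentrated in the single degree $|Q_r|-1$, and a count gives $|Q_r|\le r+\lfloor k/2\rfloor$ (with equality when $k\le 2r-1$). Hence the $I_S$-summand contributes only in degree $\ast\le r+\lfloor k/2\rfloor$, and since $2r\le n+1$ forces $r+\lfloor k/2\rfloor\le \tfrac{n+1}{2}+\lfloor k/2\rfloor=d$ with equality only when $2r=n+1$, every summand with $I_S\neq[n+1]$ vanishes in degrees $\ast\ge d$. This yields both the vanishing above $d$ and the identification of $H_d$ with the full-set summand $S^{\lambda_Q}$.

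The main obstacle is the generalized reduction in the second paragraph: verifying, for an arbitrary even $I_S$ with $2r<n+1$, that deleting the vertices $J\not\subset I_S$ is homotopy-trivial and that the subsequent subdivision really produces $\cB^{Q_r}_{I_S}$, all while keeping the $\Sym_{I_S}$-action visible. Once this structural statement and the clean bound $|Q_r|\le r+\lfloor k/2\rfloor$ are in hand, the conclusion is immediate from the single-degree concentration in Theorem~\ref{thm:skew_Specht}. The equivariance bookkeeping is comparatively painless here, because the top-degree contribution corresponds to the unique $\Sym_{n+1}$-fixed index $[n+1]$, so no induction from proper parabolic subgroups (as in Theorems~\ref{thm:induced} and~\ref{thm:induced_typeB}) is required.
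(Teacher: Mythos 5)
Your first paragraph is correct and coincides with the paper's own treatment of the top summand: the index $I_S=[n+1]$ exists because $n+1$ is even, it is fixed by all of $\Sym_{n+1}$, Lemma~\ref{lem:K_[n+1]} identifies its summand with $\widetilde{H}_{\ast-1}(\cB^{Q}_{[n+1]})$, and the count $|Q|=\frac{n+1}{2}+\lfloor k/2\rfloor=d$ together with Theorem~\ref{thm:skew_Specht} and Remark~\ref{rmk:skew_Specht} places $S^{\lambda_Q}$ exactly in degree $d$. The genuine gap is in your second paragraph: the generalized lemma you propose is false as stated. Take $n=3$, $k=2$, $I_S=\{1,2\}$, so $r=1$ and $Q_1=\{1\}$. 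Here $\fB_{3,2}$ consists of the singletons and the subsets of $[4]$ of cardinality at least $3$; the vertices of $K_S$ are $\{1\}$, $\{2\}$, $\{1,3,4\}$, $\{2,3,4\}$, and its simplices are exactly the three edges $\{\{1\},\{1,3,4\}\}$, $\{\{2\},\{2,3,4\}\}$, and $\{\{1\},\{2\}\}$ --- the last is a nested set because $\{1\}\cup\{2\}$ has cardinality $2\le k$ and hence is not in $\fB_{3,2}$, while $\{\{1\},\{2,3,4\}\}$ violates (N2) (its union is $[4]\in\fB_{3,2}$) and the two $3$-sets are neither nested nor disjoint. So $K_S$ is a path, hence contractible, whereas $\cB^{Q_1}_{I_S}$ is two discrete points. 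What you overlooked is precisely the feature distinguishing $\fB_{n,k}$ from the type-$A$ case $k=1$: any $j\le\min(k,2r)$ singletons of $I_S$ span a simplex of $K_S$, so the removal-and-subdivision procedure produces barycenters of every rank $j\le\min(k,2r)$. When $2r\le k$ the top rank $2r$ occurs, the rank-selected poset acquires a maximum element, and its order complex is a cone; the correct target is $\cB^{Q'}_{I_S}$ with $Q'=\bigl(\{1,\dots,k\}\cup\{1,3,5,\dots\}\bigr)\cap[2r]$, which agrees with your $\cB^{Q_r}_{I_S}$ only when $2r>k$.

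Even with this correction (which would still give the vanishing you need, since a cone has no reduced homology and otherwise $|Q'|=|Q_r|\le r+\lfloor k/2\rfloor\le d-1$), your argument is not a proof, because the reduction itself --- deleting the vertices $J\not\subset I_S$ equivariantly without changing the homotopy type --- is exactly what you defer as ``the main obstacle,'' and it does not follow from the results you invoke: \cite[Lemma~5.3]{Choi-Park2015} and Lemmas~3.7--3.8 of \cite{CPP16} are proved for the type-$A$ and type-$B$ Coxeter complexes, where auxiliary sets such as $J\cap I_S$ are again vertices, whereas in $K_{\fB_{n,k}}$ every subset of cardinality $2,\dots,k$ is absent from the vertex set, so those link arguments do not transfer verbatim. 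Be aware also that no crude dimension count can substitute for this missing step: for $n=5$, $k=3$, $I_S=\{1,2,3,4\}$, the collection $\{\{1\},\{2\},\{3\},\{1,2,3,5\},\{1,2,3,5,6\}\}$ is a nested set all of whose members meet $I_S$ in odd cardinality, so $K_S$ contains a $4$-simplex although $d-1=3$ (this shows the bound $\dim K_S<d-1$ asserted in the paper's own one-line proof also needs repair); a nerve argument shows this $K_S$ is nevertheless homotopy equivalent to $S^2$, so the theorem's conclusion survives, but only through a structural identification of the kind you outline --- which is precisely the part you have not proved.
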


\begin{proof}
    For $I_S = [n+1]$, by Lemma~\ref{lem:K_[n+1]}, $K_S$ is homeomorphic to $\cB^Q_{[n+1]}$ and
    $H_{d-1}(K_S)\cong S^Q$ by Theorem~\ref{thm:skew_Specht}.
    For $I_S \subsetneq [n+1]$, since $I_S$ has even cardinality, $|I_S| \leq n-1$.
    Recall that $K_S$ is the full-subcomplex of $K_{\fB_{n,k}}$ on the vertices $J \in V_{\fB_{n,k}}$ such that $|J \cap I_S|$ is odd.
    Since at most $k$ singletons can be adjacent in $K_S$, the dimension of $K_S$
     is less than $\frac{n-1}{2} + \lfloor \frac{k}{2} \rfloor = d-1$.
    \end{proof}

\begin{ex}
Fix $n=11$ and $k=4$ or $k=5$.
In this case, $d=8$, and $\lambda_{Q}$ is as follows:
$$
\tableau{
   & & & \hspace{.5cm} \\
   & & \hspace{.5cm}  & \hspace{.5cm}  \\
     &  \hspace{.5cm}&\hspace{.5cm}&  \\
       \hspace{.5cm}&\hspace{.5cm} & &\\
       \hspace{.5cm}&&&\\
       \hspace{.5cm}&&&\\
       \hspace{.5cm}&&&\\
       \hspace{.5cm}&&&\\
       \hspace{.5cm}&&&
   }$$
\end{ex}

\bibliographystyle{amsplain}

\end{document}